\documentclass[a4paper,11pt]{article}
\usepackage{amsmath,amssymb,amsfonts,amsthm}
\usepackage[top=30mm,bottom=30mm,left=25mm,right=25mm]{geometry}
\usepackage{setspace}
\usepackage{mathtools}
\usepackage{cite}
\usepackage{color}
\usepackage[pagewise]{lineno}
\usepackage{graphicx}
\usepackage{tikz}
\usetikzlibrary{patterns}
\usepackage{subcaption}
\newtheorem{thm}{Theorem}[section]

\newtheorem{lem}[thm]{Lemma}
\newtheorem{prop}[thm]{Proposition}

\newtheorem{rem}{Remark}

\theoremstyle{definition}
\newtheorem{defi}{Definition}[section]

\makeatletter
  
  \@addtoreset{equation}{section}
\makeatother

\newcommand{\RN}{{\mathbb{R}^N}}
\newcommand{\R}{\mathbb{R}}

\newcommand{\cL}{\mathcal{L}}

\DeclareMathOperator*{\esssup}{\rm{ess\,\sup}}

\title{Local existence and nonexistence of solutions to the Hardy parabolic equation with general nonlinearity}
\date{}
\author{
        Yo Tsusaka\footnote{
        E-mail address: tsusaka.yo.479@s.kyushu-u.ac.jp
        } \\
        {\small Joint Graduate School of Mathematics for Innovation, } \\
        {\small Kyushu University, } \\
        {\small Fukuoka, 819-0395, Japan} 
        }

\begin{document}
\maketitle
\renewcommand{\thefootnote}{\fnsymbol{footnote}}
\footnote[0]{2020 Mathematics Subject Classification: 35K15, 35K58, 35A01, 46E30. }
\footnote[0]{Keywords: Hardy parabolic equation, Local existence and nonexistence, Uniformly local Lebesgue space. }

\begin{abstract}
  In this paper, we consider the Cauchy problem for the Hardy parabolic equation with general nonlinearity and establish the local existence and nonexistence results. Our results provide the optimal integrability conditions on initial function for the existence of a local-in-time nonnegative solution. The proof of the existence result is based on the supersolution method. 
\end{abstract}


\section{Introduction}

We consider the existence and nonexistence of local-in-time solutions to the Cauchy problem for the Hardy parabolic equation with general nonlinearity:
\begin{equation} 
  \label{1.1} 
  \left\{ 
  \begin{array}{ll} 
  \partial_t u - \Delta u = |x|^{-\gamma} f(u), &x \in \RN, ~ t\in (0, T), \vspace{3pt} \\ 
  u(x,0)=u_0(x) \ge 0, &x \in \RN, 
  \end{array} 
  \right. 
\end{equation} 
where $N\ge 1$, $\partial_{t} = \partial / \partial t$, $\Delta = \sum_{i=1}^{N} \partial^2/\partial x_{i}^{2}$, $T > 0$, $f \in C([0, \infty))$, $0 < \gamma < \min\{2, N\}$ and $u_0$ is given nonnegative function. Throughout this paper, we assume that $f$ satisfies the following conditions: 
\begin{equation}\label{f}\tag{f}
  \begin{dcases}
    f\in C^1((0,\infty))\cap C([0,\infty)), &~ \\ 
    f(u)>0 &\text{for}~~u>0, \\
    f'(u)\ge 0 &\text{for}~~u>0, \\
    F(u)<\infty &\text{for}~~u>0, 
  \end{dcases}
\end{equation}
where
\begin{equation}\label{Fdef}
F(u):=\int_u^{\infty}\frac{d\tau}{f(\tau)}.
\end{equation}

The main purpose of this paper is to obtain a sufficient condition on the integrability of an initial function where problem $\eqref{1.1}$ admits a local-in-time nonnegative solution. We review the related results on the existence and nonexistence of local-in-time solutions to problem $\eqref{1.1}$. First, we review some results in the case $\gamma=0$, that is, the following Cauchy problem:
\begin{equation} 
  \label{nosing} 
  \left\{ 
  \begin{array}{lll} 
  \partial_t u - \Delta u = f(u), &x \in \RN, ~ t\in (0, T), \vspace{3pt} \\ 
  u(x,0)=u_0(x) \geq 0, &x \in \RN. 
  \end{array} 
  \right. 
\end{equation} 
One of the typical cases is a power type nonlinearity $f(u)=|u|^{p-1}u$ $(p>1)$. Then, Weissler~\cite{W80} obtained the following result. 

\begin{prop}\label{W80result}{\upshape{(Weissler~\cite{W80})}}
  Let $N\ge1$ and $f(u)=|u|^{p-1}u$ $(p>1)$. 
  \begin{itemize}
    \item[$\mathrm{(i)}$] $(\text{Existence})$ There exists $T>0$ such that $\eqref{nosing}$ admits a local-in-time solution $u\in C([0,T),L^r(\RN))$ if one of the following holds{\rm :}
    \begin{itemize}
      \item[$(a)$] $r>N(p-1)/2$, $r\ge1$ and $u_0\in L^r(\RN)$. 
      \item[$(b)$] $r=N(p-1)/2$, $r>1$ and $u_0\in L^r(\RN)$. 
    \end{itemize} 
    \item[$\mathrm{(ii)}$] $(\text{Nonexistence})$ For any $0<r<N(p-1)/2$, there exists $u_0\in L^r(\RN)$ such that $\eqref{nosing}$ admits no local-in-time solution. 
  \end{itemize}
\end{prop}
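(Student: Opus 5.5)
The plan is to prove the existence part by a fixed point argument for the Duhamel formulation
\[
u(t)=e^{t\Delta}u_0+\int_0^t e^{(t-s)\Delta}f(u(s))\,ds ,
\]
and the nonexistence part by a pointwise lower bound for nonnegative solutions tested against a singular initial datum. Throughout I use the $L^q$–$L^r$ estimate for the heat semigroup,
\[
\|e^{t\Delta}\varphi\|_{L^q}\le (4\pi t)^{-\frac N2(\frac1r-\frac1q)}\|\varphi\|_{L^r},\qquad 1\le r\le q\le\infty .
\]

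\textbf{Case (a).} Here $r>N(p-1)/2$. I choose $q\ge\max\{r,p\}$, and I take $q=\infty$ if needed. I work on the set of $u\in L^\infty((0,T),L^q)\cap C([0,T),L^r)$ satisfying $\sup_t t^{\frac N2(\frac1r-\frac1q)}\|u(t)\|_q\le M$. The nonlinear term is estimated by
\[
\|e^{(t-s)\Delta}|u|^p\|_{L^q}\le C(t-s)^{-\frac N2(\frac pq-\frac1q)}\|u\|_q^p .
\]
Set $\alpha=\frac N2(\frac1r-\frac1q)$ and $\beta=\frac N2\cdot\frac{p-1}{q}$. Then the time integral contains
\[
\int_0^t (t-s)^{-\beta}s^{-p\alpha}\,ds = C\,t^{1-\beta-p\alpha}.
\]
I need $\beta<1$ and $p\alpha<1$, which a suitable $q$ achieves. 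The condition $r>N(p-1)/2$ gives $1-\beta-p\alpha+\alpha=1-\frac{N(p-1)}{2r}>0$. This extra positive power of $t$ makes the map a contraction for small $T$, and the same argument run in $L^r$ gives continuity in time.

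\textbf{Case (b).} In the critical case $r=N(p-1)/2>1$ that power of $t$ vanishes, so smallness has to come from the data instead. I use density: for $u_0\in L^r$ with $q>r$, we have $t^{\alpha}\|e^{t\Delta}u_0\|_q\to0$ as $t\to0$. To see this, approximate $u_0$ by $C_c^\infty$ functions and use the uniform bound. I then run the contraction in the ball of radius $2\sup_{t<T}t^\alpha\|e^{t\Delta}u_0\|_q$, which is small for small $T$. The integral $\int_0^t(t-s)^{-\beta}s^{-p\alpha}ds$ now equals $C\,t^{-\alpha}$, which is exactly scale invariant. This step is the main obstacle. It needs a choice of $q$ with $p\alpha<1$, $\beta<1$ and $q>p$, and checking that such a $q$ exists uses $r>1$. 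Continuity at $t=0$ in $L^r$ follows from the same estimates.

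\textbf{Nonexistence (ii).} I take $u_0(x)=|x|^{-2/(p-1)}\bigl(\log(e+1/|x|)\bigr)^{-a}\chi_{B_1}$ with $a$ small, or simply $|x|^{-2/(p-1)}\chi_{B_1}$. This lies in $L^r$ whenever $r<N(p-1)/2$. Suppose a nonnegative solution exists on $(0,T)$. Every such solution satisfies $u(t)\ge e^{t\Delta}u_0$, so by the integral equation
\[
u(t)\ge e^{t\Delta}u_0+\int_0^te^{(t-s)\Delta}(e^{s\Delta}u_0)^p\,ds .
\]
I then use the standard necessary condition for existence of nonnegative solutions: for all small $t$,
\[
\|e^{t\Delta}u_0\|_{L^\infty}\le C t^{-1/(p-1)}.
\]
This is obtained, as in Weissler and in Baras--Pierre, by testing with the heat kernel and applying Jensen's inequality, which leads to a blow-up inequality of ODE type $y'\ge y^p$. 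For the chosen $u_0$, however, $e^{t\Delta}u_0(0)\ge c\,t^{-1/(p-1)}(\log(1/t))^{-a}\cdot(\text{divergent factor})$. To get a genuine contradiction I would instead take the borderline profile $|x|^{-2/(p-1)}$ multiplied by a large constant, or a slowly growing logarithmic factor $|x|^{-2/(p-1)}(\log(e+1/|x|))^{b}$ with $b>0$. This still lies in $L^r$ for $r<N(p-1)/2$ and makes $t^{1/(p-1)}\|e^{t\Delta}u_0\|_\infty\to\infty$, contradicting the necessary bound.
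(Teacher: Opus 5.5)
The paper gives no proof of this proposition; it is quoted from Weissler~\cite{W80} as background, so there is no internal proof to match. Your outline is the classical argument and it is correct.

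\textbf{Existence.} The paper's own mechanism is the supersolution $H^{-1}(2e^{t\Delta}H(u_1))$ behind Proposition~\ref{MainProp}, following \cite{MS21}. It is order-based and yields nonnegative solutions with a uniformly local bound. It does not by itself handle sign-changing $u_0\in L^r(\RN)$ or continuity in $L^r(\RN)$, so the contraction in a time-weighted $L^q$ space is the right tool for the statement as posed. Your bookkeeping checks out:
\begin{itemize}
\item $1-\beta-(p-1)\alpha=1-\frac{N(p-1)}{2r}$.
\item For $r=\frac{N(p-1)}{2}$ one has $\frac1r-\frac{2}{Np}=\frac{1}{pr}$. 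Hence $p\alpha<1$, $\beta<1$, $q\ge p$ amount to $q>r$, $q\ge p$, $q<pr$, a window that is nonempty exactly when $r>1$.
\item The bound $q<pr$ is also what lets you place $|u|^p$ in $L^{q/p}$ with $q/p<r$, which you need for $L^r$-continuity at $t=0$.
\end{itemize}
In case (a), $q=\infty$ is not admissible in general, since $p\alpha<1$ would force $r>Np/2$. However, $q=\max\{r,p\}$ always works, because $r>\frac{N(p-1)}{2}$ together with $r\ge1$ gives $r>\frac{Np}{N+2}$. For $r=\infty$, continuity at $t=0$ can only hold in a weaker sense, since $e^{t\Delta}$ is not strongly continuous on $L^\infty(\RN)$; that is a caveat on the statement, not on your argument.

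\textbf{Nonexistence.} Your necessary condition is exactly Lemma~\ref{Nonexist-lem} at $\gamma=0$. There Jensen for the convex $f$ suffices and the reverse H\"older step is void, giving the universal bound $[e^{t\Delta}u_0](x)\le F^{-1}(t)=((p-1)t)^{-1/(p-1)}$. The two routes differ only in the datum.
\begin{itemize}
\item In Theorem~\ref{Nonexist-thm} the paper takes the strictly more singular power $F^{-1}(|x|^A)=c\,|x|^{-A/(p-1)}$ with $2<A<N/r$. The resulting contradiction $t^{A/2}\ge Ct$ is insensitive to constants.
\item You stay at the borderline $|x|^{-2/(p-1)}\chi_{B(0,1)}$. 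This forces either an amplitude $\lambda>(p-1)^{-1/(p-1)}/c_0$ with $c_0=\int_{\RN}G(z,1)|z|^{-2/(p-1)}\,dz$, or an inserted factor $(\log(e+1/|x|))^{b}$ with $b>0$. Here $c_0$ is finite iff $p>1+2/N$; otherwise $u_0\notin L^1_{\rm loc}$ and nonexistence is trivial.
\item In exchange, your single datum serves every $r<N(p-1)/2$ at once.
\end{itemize}

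Do clean up that part of the write-up; only your final choice of datum should remain.
\begin{itemize}
\item The first candidate, with factor $(\log(e+1/|x|))^{-a}$, goes the wrong way: it gives $t^{1/(p-1)}[e^{t\Delta}u_0](0)\to0$, so no contradiction.
\item The plain $|x|^{-2/(p-1)}\chi_{B(0,1)}$ with amplitude $1$ contradicts the bound only if $c_0>(p-1)^{-1/(p-1)}$.
\item The line with the ``divergent factor'' is not a valid estimate.
\end{itemize}
As in the paper, what you establish is nonexistence of nonnegative solutions, which is the intended reading.
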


The exponent $N(p-1)/2$ arises from an invariance property of the equation under the following scaling{\rm :} 
\begin{equation}\label{powerscaling}
  u_{\lambda}(x,t)=\lambda^{\frac{2}{p-1}} u(\lambda x,\lambda^2 t), \quad \lambda>0. 
\end{equation}
If $u$ satisfies $\partial_t u-\Delta u=|u|^{p-1}u$, then $u_{\lambda}$ also satisfies the same equation. We see that for any $\lambda>0$, $\|u_{\lambda}(\cdot,0)\|_{r}=\|u(\cdot,0)\|_{r}$ if and only if $r=N(p-1)/2$. Therefore, Proposition~\ref{W80result} implies that the optimal integrability condition of $u_0$ for solvability of $\eqref{nosing}$ with $f(u)=|u|^{p-1}u$ is given by
\begin{equation*}
  u_0\in L^{\frac{N}{2}(p-1)}(\RN). 
\end{equation*}
Additionally, the solvability of the problem $\eqref{nosing}$ with $f(u)=|u|^{p-1}u$ $(p>1)$ has been extensively studied by many authors. See e.g. \cite{BC96, G86, LRSL16, W79, W80, W81} and references therein. 

We next review the results for $\eqref{nosing}$ with general case $f(u)$. To this end, we introduce some notation. For $1 \leq r \leq \infty$, we define uniformly local $L^r$ spaces $L^{r}_{\rm ul}(\RN)$ as
\begin{equation*}
L^r_{\rm ul}(\RN):=\left\{ u\in L^1_{\rm loc}(\RN)\left|\ \left\|u\right\|_{L^r_{\rm ul}(\RN)}<\infty\right.\right\}, 
\end{equation*}
where
\begin{equation*}
\left\|u\right\|_{L^r_{\rm ul}(\RN)}:=
\begin{dcases}
\sup_{y\in\RN}\left(\int_{B(y,1)}|u(x)|^r dx\right)^{\frac{1}{r}} & \textrm{if} \quad 1\le r<\infty, \\
\esssup_{y\in\RN} \left\| u \right\|_{L^{\infty}(B(y,1))} & \textrm{if} \quad r=\infty, 
\end{dcases}
\end{equation*}
with $B(y,\rho):=\{x\in\RN \mid  |x-y|<\rho\}$, for $\rho>0$. It is clear that $L^{\infty}_{\rm ul}(\RN) = L^{\infty}(\RN)$ and that $L^{\beta}_{\rm ul}(\RN) \subset L^{\alpha}_{\rm ul}(\RN)$ if $1\le \alpha \le \beta \le \infty$. 
We define $\cL^r_{\rm ul}(\RN)$ as
\begin{equation*}
\cL^r_{\rm ul}(\RN):=
\overline{BUC(\RN)}^{\|\,\cdot\,\|_{L^r_{\rm ul}(\RN)}},
\end{equation*}
namely, $\cL^r_{\rm ul}(\RN)$ denotes the closure of the space of bounded uniformly continuous functions $BUC(\RN)$ in the space $L^r_{\rm ul}(\RN)$. We denote $\|\cdot\|_{L^{r}(\RN)}$ by $\|\cdot\|_{r}$ for $1\le r\le\infty$. For $q\in\R$, we define $S_{q}$ as
\begin{equation*}
S_{q}:=\left\{
f\in C([0,\infty))\left|\ \textrm{$f$ satisfies (\ref{f}) and the limit $q:=\lim_{\xi\to\infty}f'(\xi)F(\xi)$ exists.} \right\} \right..
\end{equation*}
It is known that $S_{q} = \emptyset$ if $q<1$ (see \cite[Lemma~2.1]{FI23}, for instance). 

We are now ready to introduce the result on local existence of solutions to problem $\eqref{nosing}$. Fujishima--Ioku \cite{FI18} and Miyamoto--Suzuki \cite{MS21} obtained the following result. 
\begin{prop}\label{FIMSresult}{\upshape{(Fujishima--Ioku~\cite{FI18}, Miyamoto--Suzuki~\cite{MS21})}}
  \begin{itemize}
    \item[$\mathrm{(i)}$] $(\text{Existence})$ Let $u_0 \ge 0$ and $f \in S_{q}$ with $q \in [1, \infty)$. Then, there exists $T>0$ such that $\eqref{nosing}$ admits a local-in-time nonnegative solution $u$ in $(0,T)$, if one of the following holds$:$ 
    \begin{itemize}
      \item[$(a)$] $r>N/2$, $q<1+r$ and $F(u_0)^{-r} \in L^{1}_{\rm ul}(\RN)$. 
      \item[$(b)$] $r>N/2$, $q=1+r$, $F(u_0)^{-r} \in L^{1}_{\rm ul}(\RN)$ and 
      \begin{equation}\label{sub-assum}
        f'(\xi) F(\xi) \le q \quad \text{for large} ~~ \xi>0. 
      \end{equation}
      \item[$(c)$] $r=N/2$, $q<1+r$ and $F(u_0)^{-r} \in \cL^{1}_{\rm ul}(\RN)$. 
    \end{itemize}
    \item[$\mathrm{(ii)}$] $(\text{Nonexistence})$ Suppose that $f \in C^{2}([0, \infty)) \cap S_{q}$ with $q \in [1, \infty)$ and $f$ is convex in $[0, \infty)$. Let $0<r<N/2$ and $q\le1+r$. Then, there exists a nonnegative initial function $u_0$ such that $F(u_0)^{-r} \in L^{1}_{\rm ul}(\RN)$ and $\eqref{1.1}$ admits no nonnegative solution. 
  \end{itemize}
\end{prop}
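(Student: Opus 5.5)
Proposition~\ref{FIMSresult} is quoted from Fujishima--Ioku and Miyamoto--Suzuki, so my plan is to reconstruct their argument, with the supersolution method for (i) and a necessary-condition argument for (ii).

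\textbf{Reduction.} Since $F$ is decreasing and $F(\xi)\to0$ as $\xi\to\infty$, set $v:=F(u)^{-r}$ and $v_0:=F(u_0)^{-r}$. Formally,
\begin{equation*}
\partial_t v-\Delta v = rF(u)^{-r-1}\frac{1}{f(u)}\left(\partial_t u-\Delta u\right)-r(r+1)F(u)^{-r-2}\frac{|\nabla u|^2}{f(u)^2}+rF(u)^{-r-1}\frac{f'(u)}{f(u)^2}|\nabla u|^2 .
\end{equation*}
The first term contributes $rF(u)^{-r-1}$, and $F(u)^{-r-1}=v^{1+1/r}$. The gradient part has coefficient $rF^{-r-2}f^{-2}|\nabla u|^2\,(f'F-(r+1))$. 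This is $\le0$ when $f'F\le q<1+r$ asymptotically, or $f'F\le q=1+r$ in case (b), which is exactly assumption \eqref{sub-assum}. So for large values $v$ is a subsolution of $\partial_t v-\Delta v\le r v^{1+1/r}$, a power problem with $p=1+1/r$. Its critical exponent is $N(p-1)/2=N/(2r)$ in $L^1$-scaling, i.e.\ $v_0\in L^1_{\rm ul}$ is subcritical iff $r>N/2$.

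\textbf{Existence.} I reverse the transformation to build a supersolution. Let $w$ solve $\partial_t w-\Delta w= C w^{1+1/r}$ with $w(0)=v_0+M$, where $M$ is large so that we stay in the range where $f'F$ is controlled. This is solvable locally by Proposition~\ref{W80result}, more precisely by its uniformly local version: case (a)/(b) is the subcritical case $1>N/(2r)$; case (c) is the critical case $r=N/2$, which needs the $\cL^1_{\rm ul}$ closure to get smallness of the local norm. Then set $\bar u:=F^{-1}(w^{-1/r})$ and verify $\partial_t\bar u-\Delta\bar u\ge f(\bar u)$ by running the computation above backwards. The sign conditions give $\ge$, the constant $C$ is chosen from $\sup f'F$ on the large range, and $\bar u(0)\ge u_0$. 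The monotone iteration $u_{n+1}=e^{t\Delta}u_0+\int_0^t e^{(t-s)\Delta}f(u_n)\,ds$, starting from $u_1=e^{t\Delta}u_0$, then stays below $\bar u$ and converges to a solution. In place of differentiating, a cleaner route is Jensen-type inequalities: concavity/convexity of $F^{-1}(s^{-1/r})$ gives $e^{t\Delta}F^{-1}(\cdot)\le F^{-1}(e^{t\Delta}\cdot)$, and this handles the heat semigroup in the integral formulation.

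\textbf{Nonexistence.} For (ii), I take $u_0$ with a singularity at the origin, $u_0(x)=F^{-1}(|x|^{2}\,|\log|x||^{a})$ near $0$, so that $F(u_0)^{-r}\sim|x|^{-2r}|\log|x||^{-ar}$. This lies in $L^1_{\rm loc}$ because $2r<N$. Now suppose a solution exists. Using convexity of $f$ and Jensen, the known necessary condition for the power case applied to $v$ (a solution must satisfy $\int_{B(0,\rho)}F(e^{t\Delta}u_0)\dots$), or directly the lower bound $u(t)\ge e^{t\Delta}u_0$ together with $F(u(t))\le F(e^{t\Delta}u_0)-t$ obtained from comparison with the ODE, yields $F(e^{t\Delta}u_0)(0)\ge t$. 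But $F(e^{t\Delta}u_0)(0)\to0$ faster than $t$ for suitable $a$, since $e^{t\Delta}u_0(0)\gtrsim u_0(\sqrt t)$ and $F(u_0(\sqrt t))\approx t|\log t|^{a}\ll t$ when $a<0$. Choosing $a<0$ with $ar$ still small enough for integrability gives a contradiction.

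\textbf{Main obstacle.} The delicate step is the critical case (c) and the borderline $q=1+r$ in existence. In (c) I must use $\cL^1_{\rm ul}$ to make the local $L^{N/2}$-type norm small. In the borderline case \eqref{sub-assum} is needed exactly to keep the sign of the gradient term.
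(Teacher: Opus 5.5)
Case (c) has a genuine gap. Your outline for (a), (b) and (ii) is sound. The paper does not prove this proposition (it is quoted from \cite{FI18,MS21}). Its Theorems~\ref{Exist-thm} and \ref{Nonexist-thm} are the Hardy analogues, proved with the Miyamoto--Suzuki supersolution $\overline{u}=H^{-1}(2e^{t\Delta}H(u_1))$, $H=F^{-r}$, and with Lemma~\ref{Nonexist-lem}.

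\textbf{The gap in (c).} You keep the exponent $r=N/2$, so your comparison problem is $\partial_t w-\Delta w=r\,w^{1+2/N}$ with $w(0)=F(u_0)^{-N/2}+M\in\cL^1_{\rm ul}(\RN)$. This is the $L^1$ endpoint of the Fujita-critical power problem. Proposition~\ref{W80result}~(b) excludes exactly this case by requiring the critical exponent to be $>1$, and the case is in fact false: there are nonnegative $w_0\in L^1(\RN)\subset\cL^1_{\rm ul}(\RN)$ for which $w^{1+2/N}$ has no local solution, since the $L^1$ criterion of \cite{LRSL16} fails for this power.

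The smallness $\|e^{t\Delta}w_0\|_\infty\le\varepsilon t^{-N/2}$ coming from the closure does not help. Weissler's auxiliary exponent $\ell$ would have to satisfy $p\le\ell<pm$ with data exponent $m=1$, which is impossible.

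There is also a quick test showing the argument cannot be right. It uses $q<1+r$ only through the sign $f'F\le 1+r$, so it would equally prove the doubly critical case $q=1+r$, $r=N/2$ under \eqref{sub-assum}. For $f(u)=u^{1+2/N}$ one has $F(u)^{-N/2}=(2/N)^{N/2}u$ and $f'F\equiv 1+N/2$. Your argument would then give local solvability for every $0\le u_0\in\cL^1_{\rm ul}(\RN)$, which contradicts the result just cited.

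\textbf{How to repair (c).} The slack in $q<1+r$ has to be spent somewhere. Within your framework, pick $r'\in(q-1,r)$ with $r'>0$. Then:
\begin{itemize}
\item $F^{-r'}$ is convex for large arguments, since $f'F\to q<1+r'$.
\item $v=F(u)^{-r'}$ leads to the power $1+1/r'$, whose critical Lebesgue exponent is $N/(2r')=r/r'>1$.
\item $F(u_0)^{-r'}=(F(u_0)^{-r})^{r'/r}\in\cL^{r/r'}_{\rm ul}(\RN)$, using $|a^s-b^s|\le|a-b|^s$ with $s=r'/r$.
\end{itemize}
Weissler's critical case with exponent $>1$, in uniformly local form with the smallness of Proposition~\ref{LpLqul}~(ii), then gives $w$. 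The supersolution is $\overline{u}=F^{-1}(w^{-1/r'})$.

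The paper's (Miyamoto--Suzuki's) route spends the slack differently. It chooses $\theta\in((q-1)/r,\min\{1,q/r\})$, which is possible only if $q<1+r$. It then uses the room $q+\varepsilon<N\theta/2+1$ in Lemma~\ref{keylemofkeyprop} and Proposition~\ref{keyprop}.

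\textbf{Cases (a) and (b).} Your reduction is a genuinely different and valid route there, because $L^1_{\rm ul}$ is subcritical for the power $1+1/r<1+2/N$. The constant is exactly $C=r$, since $(H^{-1})'(w)\,r\,w^{1+1/r}=f(\overline{u})$; $f'F$ enters only through the sign. Your route outsources the analysis to the power theory. The paper's supersolution, built from the linear flow $e^{t\Delta}H(u_1)$, is self-contained and is what carries over to the weight $|x|^{-\gamma}$.

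\textbf{Part (ii).} This is Lemma~\ref{Nonexist-lem} at $\gamma=0$. There, convexity of $f$ and Jensen, applied to $\tau\mapsto e^{(t-\tau)\Delta}u(\tau)$, make $F([e^{t\Delta}u_0](x))-F(u(x,t))\ge t$ rigorous. One step should be replaced. The bound $[e^{t\Delta}u_0](0)\ge c\,u_0(\sqrt t)$ puts a constant $c<1$ inside $F$. That is harmless for $q>1$ but fatal for $q=1$: for $f=e^u$, $F(c\,u_0(\sqrt t))$ is of order $(t|\log t|^a)^c$, not $o(t)$. Use convexity of $F^{-1}$ as in \eqref{Non2}, that is, $F([e^{t\Delta}u_0](0))\le[e^{t\Delta}F(u_0)](0)$. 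The logarithmic factor is also unnecessary. Since $r<N/2$ strictly, $F(u_0)=|x|^A$ with $2<A<N/r$ already gives $F([e^{t\Delta}u_0](0))\le Ct^{A/2}=o(t)$.
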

\begin{rem}\label{Remark1}
  \begin{itemize}
    \item[$\mathrm{(i)}$] Proposition~$\ref{FIMSresult}$ shows that the optimal integrability condition of initial function $u_0$ for solvability of $\eqref{nosing}$ is given by
    \begin{equation}\label{nosing-oic}
      F(u_0)^{-\frac{N}{2}} \in \cL^{1}_{\rm ul}(\RN). 
    \end{equation}
    \item[$\mathrm{(ii)}$] The integrability condition $\eqref{nosing-oic}$ is derived from the following generalized scaling proposed by Fujishima~{\rm \cite{F14}:} 
    \begin{equation}\label{nosingscaling}
      u_{\lambda}(x,t) := F^{-1}[\lambda^{-2} F(u(\lambda x, \lambda^{2} t))], \quad \lambda>0, 
    \end{equation}
    where $F^{-1}$ is an inverse function of $F$. Note that if $u$ satisfies $\eqref{nosing}$, then $u_{\lambda}$ satisfies 
    \begin{equation}\label{nosinginveq}
      \partial_{t} u_{\lambda} = \Delta u_{\lambda} + f(u_{\lambda}) + \frac{|\nabla u_{\lambda}|^{2}}{f(u_{\lambda}) F(u_{\lambda})} (f'(u) F(u) - f'(u_{\lambda}) F(u_{\lambda})). 
    \end{equation}
    Hence, generalized scaling $\eqref{nosingscaling}$ does not necessarily preserve the equation $\eqref{nosing}$. However, the following invariance holds{\rm :} 
    \begin{equation*}
      \int_{\RN} \frac{1}{F(u(x, 0))^{\frac{N}{2}}} \,dx = \int_{\RN} \frac{1}{F(u_{\lambda}(x, 0))^{\frac{N}{2}}} \,dx, \quad \lambda>0. 
    \end{equation*}
    This plays a crucial role for classifying the existence and nonexistence of a local-in-time solution to $\eqref{nosing}$ in Lebesgue space, see {\rm \cite{FI18}} for detail. 
    \item[$\mathrm{(iii)}$] When $f(u)=u^p$ $(p>1)$, then $F(u)=(p-1)^{-1} u^{-(p-1)}$ and $f'(u)F(u)=p/(p-1)$. Thus, the scaling $\eqref{nosingscaling}$ coincides with $\eqref{powerscaling}$ and the remainder term of the equation $\eqref{nosinginveq}$ vanishes. Moreover, since $F(u)^{-N/2}=(p-1)^{N/2} u^{N(p-1)/2}$, Proposition~$\ref{FIMSresult}$ is a generalization of Proposition~$\ref{W80result}$, 
  \end{itemize}
\end{rem}
Furthermore, there are various results on the local and global solvability of $\eqref{nosing}$. See e.g. \cite{FHIL24, FI18, FI21, FI23, GM22, HM25, MS21} and references therein. 

We move to review in the case $\gamma>0$. In the typical case $f(u)=|u|^{p-1}u$ $p>1$, there are various results on the solvability. Wang \cite{W93} proved the local existence of solutions in $C([0, T]; C_{B}(\RN))$ for all $u_0 \in C_{B}(\RN)$, where $C_{B}(\RN)$ is the space of bounded continuous functions. Ben Slimene--Tayachi--Weissler \cite{BTW17} proved well-posedness in $C_{0}(\RN)$ and Lebesgue space $L^{r}(\RN)$ for a suitable $r>1$, where $C_{0}(\RN)$ is the space of continuous functions vanishing at infinity. Furthermore, Chikami--Ikeda--Taniguchi \cite{CIT22} proved well-posedness in weighted Lebesgue spaces $L^{r}_{s}(\RN)$ for suitable $r>0$ and $s\in\R$. In those results, the proof is based on the contraction mapping principle in suitable function spaces. Additionally, problem $\eqref{1.1}$ with $f(u)=|u|^{p-1}u$ is also studied by many authors from the point of view of the global solvability and asymptotic behavior. See e.g. \cite{C19, CIT21, CIT22, HS24, HT21, T20} and references therein. 

  For general case $f(u)$, Castillo--Guzm\'{a}n-Rea--Loayza \cite{CGL22} and Carhuas-Torre--Castillo--Loayza \cite{CCL25} studied the local existence in Lebesgue spaces and uniformly local Lebesgue spaces, respectively. In particular, it was shown in \cite{CCL25} that $\eqref{1.1}$ admits a local-in-time nonnegative solution for every nonnegative initial function $u_0\in L^{r}_{\rm ul}(\RN)$ if and only if $f$ satisfies
\begin{equation}\label{CCL25NS}
  \begin{dcases}
    \int_{1}^{\infty} \tau^{-\left(1+\frac{2-\gamma}{N} \right)} \tilde{F}(\tau) \,d\tau<\infty &\text{if}~~r=1, \\
    \limsup_{\xi\to\infty} \xi^{-1+\frac{(2-\gamma)r}{N}} f(\xi) <\infty, &\text{if}~~r>1, 
  \end{dcases}
\end{equation}
where $\tilde{F}(\tau) = \sup_{1\le\sigma\le\tau} f(\sigma)/\sigma$, $\tau>0$. This implies that the condition $\eqref{CCL25NS}$ is necessary and sufficient conditions on $f$ for which $\eqref{1.1}$ admits a local-in-time solution for every initial function $u_0$ belonging to uniformly local Lebesgue space. On the other hand, to the best of our knowledge, there are no results on the conditions of initial function $u_0$ for which $\eqref{1.1}$ admits a local-in-time solution for given $f$. In this paper, we study this problem and give sufficient conditions of integrability on $u_0$. 

Before we state our main results, as in Remark~1~(ii), we define the generalized scaling for $\eqref{1.1}$: for $\lambda>0$, 
\begin{equation*}
  u_{\lambda}(x, t) := F^{-1}[\lambda^{-(2-\gamma)} F(u(\lambda x, \lambda^{2} t))]. 
\end{equation*}
Then, we see that if $u$ is a solution to $\eqref{1.1}$, then for all $\lambda>0$, $u_{\lambda}$ satisfies
\begin{equation*}
  \partial_{t} u_{\lambda} = \Delta u_{\lambda} + |x|^{-\gamma} f(u_{\lambda}) + \frac{|\nabla u_{\lambda}|^{2}}{f(u_{\lambda}) F(u_{\lambda})} (f'(u) F(u) - f'(u_{\lambda}) F(u_{\lambda})). 
\end{equation*}
Furthermore, the following invariance holds: 
\begin{equation*}
  \int_{\RN} \frac{1}{F(u(x, 0))^{\frac{N}{2-\gamma}}} \,dx = \int_{\RN} \frac{1}{F(u_{\lambda}(x, 0))^{\frac{N}{2-\gamma}}} \,dx, \quad \lambda>0. 
\end{equation*}
By this invariance, we expect that the optimal integrability condition of initial function $u_0$ for local existence of solutions to $\eqref{1.1}$ is given by
\begin{equation}\label{OIC}
  F(u_0)^{-\frac{N}{2-\gamma}} \in \cL^{1}_{\rm ul}(\RN). 
\end{equation}

Now, we introduce the definition of solution to $\eqref{1.1}$. Let $\{e^{t\Delta}\}_{t>0}$ be a heat semi-group on $\RN$;
\begin{equation*}
  [e^{t\Delta} \phi](x) := \int_{\RN} G(x-y, t) \phi(x) \,dy \quad \text{for}~~\phi \in L^{1}_{\rm ul}(\RN), 
\end{equation*}
where $G(x, t) = (4\pi t)^{-N/2} \exp{(-|x|^{2} / 4t)}$. We define a solution to $\eqref{1.1}$. 

\begin{defi}\label{Defsol}
  We say that a nonnegative measurable function $u$ is a local-in-time solution to \eqref{1.1} in $\RN\times (0, T)$ if there exists $T>0$ such that $u \in L^{\infty}((0, T), L^{1}_{\rm ul}(\RN)) \cap L^{\infty}_{\rm loc}((0, T), L^{\infty}(\RN))$ and $u$ satisfies 
  \begin{equation*}
      u(x, t) = [e^{t\Delta} u_0](x) + \int_{0}^{t} [e^{(t-s)\Delta} |\cdot|^{-\gamma} f(u(\cdot, s))](x) \,ds
    \end{equation*}
    for a.a. $x\in \RN$, $0<t<T$. 
    We say that a nonnegative measurable function $\overline{u}$ is a supersolution of $\eqref{1.1}$ if there exists $T>0$ such that 
    \begin{equation*}
      \overline{u}(x, t) \ge [e^{t\Delta} u_0](x) + \int_{0}^{t} [e^{(t-s)\Delta} |\cdot|^{-\gamma} f(\overline{u}(\cdot, s))](x) \,ds
    \end{equation*}
    for a.a. $x \in \RN$, $0 < t < T$. 
\end{defi}

We are ready to state the main theorem on the local existence of $\eqref{1.1}$. 

\begin{thm}\label{Exist-thm}
  Let $N\ge1$, $0<\gamma <\min\{2, N\}$, $u_0\ge 0$ and $f \in S_{q}$ with $q \in [1, \infty)$. Then, there exists $T>0$ such that $\eqref{1.1}$ has a local-in-time nonnegative solution $u$ in $\RN \times(0,T)$, if one of the following holds{\rm :} 
  \begin{itemize}
    \item[$\mathrm{(i)}$] $r>N/(2-\gamma)$, $q<1+r$ and $F(u_0)^{-r} \in L^{1}_{\rm ul}(\RN)$. 
    \item[$\mathrm{(ii)}$]  $r>N/(2-\gamma)$, $q=1+r$, $F(u_0)^{-r} \in L^{1}_{\rm ul}(\RN)$ and $\eqref{sub-assum}$ holds.   
    \item[$\mathrm{(iii)}$] $r=N/(2-\gamma)$, $q<1+r$ and $F(u_0)^{-r} \in \cL^{1}_{\rm ul}(\RN)$. 
  \end{itemize}
  Moreover, in all cases, there exists a constant $C>0$ depending on $N$ and $u_0$ such that
  \begin{equation}\label{Exist-thm-est}
    \|F(u(t))^{-r}\|_{L^{1}_{\rm ul}(\RN)} \le C \quad \text{for}~~0<t<T. 
  \end{equation}
\end{thm}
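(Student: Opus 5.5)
We will prove Theorem~\ref{Exist-thm} by the supersolution method, following the strategy of Fujishima--Ioku \cite{FI18} and Miyamoto--Suzuki \cite{MS21} for $\gamma=0$. The weight $|x|^{-\gamma}$ is handled through smoothing estimates for $e^{t\Delta}$ on uniformly local spaces. The standing fact used at the end is that once a nonnegative supersolution $\overline u$ exists on $(0,T)$ with $\overline u\in L^\infty((0,T),L^1_{\rm ul})\cap L^\infty_{\rm loc}((0,T),L^\infty)$, the monotone iteration
\[
u_0(t)=e^{t\Delta}u_0,\qquad u_{n+1}(t)=e^{t\Delta}u_0+\int_0^t e^{(t-s)\Delta}|\cdot|^{-\gamma}f(u_n(s))\,ds
\]
is nondecreasing, because $f$ is nondecreasing. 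It is also bounded above by $\overline u$, so by monotone convergence it converges to a solution $u\le\overline u$. This reduces the theorem to constructing $\overline u$.

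\textbf{Construction.} Choose an exponent $\alpha$ close to $1$ (cases (i),(iii)), or use the structural assumption \eqref{sub-assum} (case (ii)), so that $\phi(u):=F(u)^{-r}$, or a modified version $\Phi$ behaving like it at infinity, is convex and increasing for large $u$. The convexity should follow from $\lim f'F=q<1+r$, since $(F^{-r})''$ has the sign of $f'F-(1+r)$ up to positive factors. In case (ii) we have $q=1+r$, and \eqref{sub-assum} gives exactly $f'F\le 1+r$ near infinity, which is what is needed. Put $w_0:=\Phi(u_0)\in L^1_{\rm ul}$ and take the candidate
\[
\overline u(t):=\Phi^{-1}\bigl(2e^{t\Delta}w_0\bigr),
\]
possibly with a small additive constant to stay in the region where $\Phi$ is convex. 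By Jensen's inequality, $\Phi^{-1}(e^{t\Delta}w_0)\ge e^{t\Delta}u_0$. It then remains to show
\[
\int_0^t e^{(t-s)\Delta}|\cdot|^{-\gamma}f(\overline u(s))\,ds\le \Phi^{-1}(2e^{t\Delta}w_0)-\Phi^{-1}(e^{t\Delta}w_0).
\]
Following \cite{FI18}, we use the convexity relation $f(\Phi^{-1}(v))\lesssim \Phi^{-1}(v)'\,v^{1+1/r}$. Because $F(u)^{-1}=v^{1/r}$, the nonlinear term behaves like $v\cdot v^{1/r}$.

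\textbf{Key estimate.} We need smallness of
\[
\int_0^t\bigl\| e^{(t-s)\Delta}\bigl(|\cdot|^{-\gamma}(e^{s\Delta}w_0)^{1/r}\bigr)\bigr\|_\infty\,ds .
\]
The plan is to combine three ingredients: the $L^1_{\rm ul}$--$L^\infty$ smoothing $\|e^{s\Delta}w_0\|_\infty\le C s^{-N/2}\|w_0\|_{L^1_{\rm ul}}$; the bound $(e^{s\Delta}w_0)^{1/r}\le C s^{-N/(2r)}$; and the fact that $|x|^{-\gamma}\in L^{\sigma}_{\rm ul}$ for $\sigma<N/\gamma$, so that $\|e^{\tau\Delta}|\cdot|^{-\gamma}\|_\infty\le C\tau^{-\gamma/2}$ for $\tau\le 1$. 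Together these give a bound of order
\[
\int_0^t (t-s)^{-\gamma/2}s^{-N/(2r)}\,ds \sim t^{1-\gamma/2-N/(2r)},
\]
which tends to $0$ precisely when $r>N/(2-\gamma)$. This proves cases (i) and (ii), with $T$ depending on $\|w_0\|_{L^1_{\rm ul}}$.

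\textbf{Main obstacle: the critical case (iii).} When $r=N/(2-\gamma)$ the exponent above vanishes. Here we would use $w_0\in\cL^1_{\rm ul}$, which gives $\lim_{s\to0}s^{N/2}\|e^{s\Delta}w_0\|_\infty=0$ by approximation with $BUC$ functions. The factor $t^{0}$ then becomes $\sup_{s<t}\bigl(s^{N/2}\|e^{s\Delta}w_0\|_\infty\bigr)^{1/r}$, which is small for small $t$. One must also make sure the weight is compatible with this borderline scaling: the kernel estimate should be arranged via a Hölder split of the heat kernel at scale $(t-s)^{1/2}$, so that only the $L^\infty$ bound of $(e^{s\Delta}w_0)^{1/r}$ enters and the $|x|^{-\gamma}$ integrability is used locally.

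Finally, estimate \eqref{Exist-thm-est} follows from $F(u)^{-r}\le F(\overline u)^{-r}\sim 2e^{t\Delta}w_0$ together with the boundedness of $e^{t\Delta}$ on $L^1_{\rm ul}$ for $t\le 1$.
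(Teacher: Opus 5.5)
Your overall architecture matches the paper's. You take the supersolution $\overline u(t)=H^{-1}(2e^{t\Delta}H(u_1))$ with $H=F^{-r}$ (your $\Phi$), use Jensen plus the mean value theorem to get $\overline u(t)-e^{t\Delta}u_0\ge \frac12 H(\overline u(t))/H'(\overline u(t))$, and close with monotone iteration. The gap is in the key estimate.

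\textbf{The factor $(H^{-1})'$ cannot be cancelled.} Set $v(s)=2e^{s\Delta}H(u_1)$. Then exactly $f(\overline u(s))=r\,(H^{-1})'(v(s))\,v(s)^{1+1/r}$, while the target is $\frac12 (H^{-1})'(v(t))\,v(t)$. Your reduction to ``$v\cdot v^{1/r}$'' silently cancels $(H^{-1})'$. But this factor is evaluated at $v(s)$ inside $e^{(t-s)\Delta}[|\cdot|^{-\gamma}\,\cdot\,]$ on the left and at $v(t)$ on the right. Since $(H^{-1})'$ is decreasing and $v(s)$, $v(t)$ are not pointwise ordered, this is not legitimate. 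For $f=u^p$ the two sides are really $v(s)^{q/r}$ and $v(t)^{(q-1)/r}$, not $v(s)^{1+1/r}$ and $v(t)$.

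The honest version of your split is $f(\overline u(s))\le\|(f/H)(\overline u(s))\|_\infty H(\overline u(s))$, which is the paper's choice $\theta=1$. It works only when $f/H=fF^r$ is eventually nondecreasing, roughly $r<q$; this includes case (ii), where \eqref{sub-assum} gives convexity of $H$. For $r>q$ that sup is merely a constant, and you are left needing $t^{1-\gamma/2}H'(H^{-1}(Ct^{-N/2}))\to0$. This does not follow from $r>N/(2-\gamma)$; it fails for $f=u^p$, $N\ge2$, $r$ large.

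The missing idea is an intermediate exponent $\theta\in((q-1)/r,\min\{1,q/r\})$. This interval is nonempty precisely because $q<1+r$, and $\theta$ is chosen so that both $f/H^\theta$ and $H'/H^{1-\theta}$ are eventually nondecreasing. Then
$f(\overline u(s))\le \|(f/H^\theta)(\overline u(s))\|_\infty H(\overline u(s))^\theta$ and $H(\overline u(t))/H'(\overline u(t))\ge H(\overline u(t))^\theta/\|(H'/H^{1-\theta})(\overline u(t))\|_\infty$.
Both sup norms are controlled through $\|\overline u(t)\|_\infty\le H^{-1}(Ct^{-N/2})$, and the pointwise factor $H(\overline u(t))^\theta$ matches on both sides.

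\textbf{The weight $|x|^{-\gamma}$.} To match that factor you need a pointwise bound of $e^{(t-s)\Delta}[|\cdot|^{-\gamma}H(\overline u(s))^\theta]$ by $H(\overline u(t))^\theta$, and your plan does not supply one. The step implicit in your key quantity, $e^{(t-s)\Delta}[|\cdot|^{-\gamma}ab]\lesssim\|e^{(t-s)\Delta}[|\cdot|^{-\gamma}a]\|_\infty\,e^{(t-s)\Delta}b$, is false in general. The singularity of $|x|^{-\gamma}$ can sit exactly where $e^{s\Delta}H(u_1)$ concentrates at scale $\sqrt s$.

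The paper's Lemma~\ref{3lem1} gives the bound $Ct^{\gamma/2}(t-s)^{-\gamma/2}s^{-\gamma/2}H(\overline u(t))^\theta$. It uses Jensen for the concave power $\theta$ with the weighted estimate of Lemma~\ref{keylem1} when $\theta>\gamma/N$, H\"older with $|\cdot|^{-\gamma/(1-\theta)}$ when $\theta<(N-\gamma)/N$, and interpolation in between. The factor $s^{-\gamma/2}$ is what your heuristic drops. It does not change the exponent count, but it reflects exactly this interaction.

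\textbf{The critical case (iii).} Your ``$t^{0}$ times a small constant'' is exact only for power nonlinearities. For general $f\in S_q$ the scaling holds only up to $\rho^{\pm\varepsilon}$ factors, so boundedness of the borderline quantity must be proved. The paper rewrites it as $\frac1{f(\eta)}\int_0^1\sigma^{\frac N2(\theta+\frac{2-\gamma}N)-1}(1-\sigma)^{-\gamma/2}f(F^{-1}(F(\eta)\sigma^{\frac{2-\gamma}2}))\,d\sigma$. It then bounds this uniformly in large $\eta$ via Lemma~\ref{keylemofkeyprop} and Proposition~\ref{keyprop}, absorbing the $\varepsilon$-loss into the margin $\theta>(2-\gamma)(q-1)/N$. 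Your sketch has no substitute for this step.
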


\begin{rem}
  \begin{itemize}
    \item[$\mathrm{(i)}$] By Theorem~$\ref{Exist-thm}$, we obtain the sufficient conditions with respect to $r$ and $q$ for local existence of $\eqref{1.1}$ $(\text{see {\bf Fig.~\ref{q-rplane}}})$. Note that Theorem~$\ref{Exist-thm}$ does not cover the doubly critical case $q=1+r$ and $r=N/(2-\gamma)$. 
    \item[$\mathrm{(ii)}$] In the cases $q<1+r$ or $q=1+r$ with $\eqref{sub-assum}$, $F(\xi)^{-r}$ is convex for large $\xi>0$. Thus, if $F(u_0)^{-r}\in L^{1}_{\rm ul}(\RN)$, then $u_0\in L^{1}_{\rm ul}(\RN)$ and hence, $e^{t\Delta}u_0<\infty$ for $t>0$. 
  \end{itemize}
\end{rem}

We prove Theorem~\ref{Exist-thm} by so-called supersolution method (see Proposition~\ref{equisuper}). In particular, we employ the argument by Miyamoto--Suzuki \cite[Theorem~A]{MS21} for $\eqref{nosing}$. In our case, we need to take care of the term $|\cdot|^{-\gamma}$ in $\eqref{1.1}$. To handle this, we use H\"{o}lder's inequality and Jensen's inequality with some estimate of the heat kernel, and then we obtain the estimate of the integral term of a supersolution, see a proof of Lemma~\ref{3lem1} for detail. Furthermore, in the critical case (iii), we derive the estimate of $f(F^{-1}(\cdot))$ (see Lemma~\ref{keylemofkeyprop}) and show the convergence of $\eqref{3.2}$. 

Next, we state the nonexistence results of local-in-time solutions to $\eqref{1.1}$. 

\begin{thm}\label{Nonexist-thm}
  Suppose that $f \in C^{2}([0, \infty)) \cap S_{q}$ with $q \in [1, \infty)$ and 
  \begin{equation}\label{Nonexist-assum}
    \frac{f(\xi) f''(\xi)}{(f'(\xi))^{2}}\ge\delta \quad \text{for}~~\xi\ge 0
  \end{equation}
  for sufficiently small $\delta>0$. Let $0<r<N/(2-\gamma)$ and $q \le 1+r$. Then, there exists a nonnegative initial function $u_0 \in L^{1}_{\rm ul}(\RN)$ such that $F(u_0)^{-r} \in L^{1}_{\rm ul}(\RN)$ and $\eqref{1.1}$ does not have nonnegative local-in-time solutions. 
\end{thm}

\begin{figure}[t]
  \centering
  \begin{tikzpicture}[scale=1.0, >=stealth]
  \draw[->, semithick] (-0.5,0) -- (6.0,0) node[right] {$q$};
  \draw[->, semithick] (0,-0.5) -- (0,5) node[above] {$r$};
  \node[below left] at (0,0) {$O$};
  \def\Na{2.5}    
  \def\Nb{3}      
  \def\Qa{1}      
  \def\Qb{1+\Na}  
  \coordinate (A) at (0,0);
  \coordinate (B) at (0,\Na);
  \coordinate (C) at (0,\Nb);
  \coordinate (D) at (\Qa,0);
  \coordinate (E) at (\Qa,\Na);
  \coordinate (F) at (\Qa,\Nb);
  \coordinate (G) at (\Qa,4.8);
  \coordinate (I) at (\Qb,0);
  \coordinate (J) at (\Qb,\Na);
  \coordinate (K) at (5.8,4.8);
  \draw[dotted, semithick] (B)--(E);   
  \draw[dotted, semithick] (I)--(J);   
  \fill[pattern=north west lines, pattern color=black!100, ]
    (E)--(F)--(G)--(K)--(J)--(E)--cycle;
  \fill[gray!50] (D)--(E)--(J)--(D)--cycle;
  \draw[->, thin] (2.7,0.7) -- (1.9,1.6);
  \draw[->, thin] (4.2,\Na) -- (4.2,3.1);
  \draw[->, thin] (3.7,1.4) -- (2.6,1.4) -- (2.6,2.4);
  \draw[-> , opacity=0] (1.25,-0.7) -- (1.25,-0.1);
  \draw[semithick] (D)--(G);       
  \draw[ultra thick] (E)--(J);     
  \draw[semithick] (D)--(K);       
  \draw[ultra thick] (J)--(K);     
  \node[fill=white, inner sep=2pt] at (2.5,3.7) {Existence~(i)};
  \node[fill=white, inner sep=2pt] at (5.0,2.3) {Existence~(ii)};
  \node[fill=white, inner sep=2pt] at (5.0,1.4) {Existence~(iii)};
  \node[fill=white, inner sep=2pt] at (3.2,0.6) {Nonexistence};
  \node[right] at (4.6,3.3) {$r = q - 1$};
  \node[below] at (1,0) {$1$};
  \node[below] at (\Qb,0) {$1 + \dfrac{N}{2 - \gamma}$};
  \node[left] at (0,\Na) {$\dfrac{N}{2 - \gamma}$};
  \node[opacity=0.0,below] at (1.25,-0.7) {$\dfrac{\gamma}{2 - \gamma}$};
\end{tikzpicture}
  \caption{Existence and nonexistence area of a local-in-time solution in Theorem~\ref{Exist-thm} and Theorem~\ref{Nonexist-thm} . Existence~(i), (ii) and (iii) correspond to the results of Theorem~\ref{Exist-thm} (i), (ii) and (iii), respectively.}
  \label{q-rplane}
\end{figure}
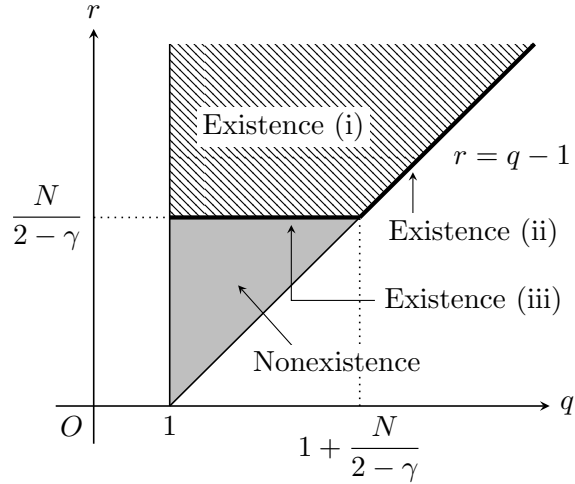

\begin{rem}
  \begin{itemize}
    \item[$\mathrm{(i)}$] By Theorem~$\ref{Nonexist-thm}$, we obtain the conditions on $r$ and $q$ for the nonexistence of local-in-time solutions to $\eqref{1.1}$ $(\text{see {\bf Fig.~\ref{q-rplane}}})$.  
    \item[$\mathrm{(ii)}$] Theorem~$\ref{Exist-thm}$ and Theorem~$\ref{Nonexist-thm}$ imply that $\eqref{OIC}$ is the optimal integrability condition of initial function $u_0$ for local solvability of $\eqref{1.1}$. 
    \item[$\mathrm{(iii)}$] The assumption $\eqref{Nonexist-assum}$ is important for our approach because we use the convexity of $f^{1-\delta}$. We note that $\eqref{Nonexist-assum}$ holds if $f(\xi) = \xi^{p}$ $(p>1)$ or $f(\xi) = e^{\xi}$. 
  \end{itemize}
\end{rem}

We prove Theorem~\ref{Nonexist-thm} by contradiction which is based on the argument in \cite{ACCL25}, \cite{FI21} and \cite{MS21} etc. for $\eqref{nosing}$. Similar to Theorem~\ref{Exist-thm}, we need to take care of the term $|\cdot|^{-\gamma}$ in $\eqref{1.1}$. To take care of them, we use the condition $\eqref{Nonexist-assum}$ and derive the reverse H\"{o}lder's inequality. Then, we obtain the necessary condition for the existence of local-in-time nonnegative solutions to $\eqref{1.1}$ (see Lemma~\ref{Nonexist-lem}). By using this necessary condition, we prove Theorem~\ref{Nonexist-thm} by the contradiction argument. 

The outline of the paper is as follows. In Section~2, we introduce several properties of uniformly local Lebesgue space $L^{r}_{\rm ul}(\RN)$. We also give a pointwise estimate of $e^{(t-s)\Delta}[\left|\cdot\right|^{-\alpha}e^{s\Delta}\phi]$, which is necessary to control the nonlinear term. In Section~3, we introduce an abstract existence result for $\eqref{1.1}$ and give a sufficient condition on the existence of local-in-time solution to problem $\eqref{1.1}$ (see Proposition~\ref{MainProp}). As an application of Proposition~\ref{MainProp}, we prove Theorem~\ref{Exist-thm}. In Section~4, we show some lemma needed to prove Theorem~\ref{Nonexist-thm} and show a proof of Theorem~\ref{Nonexist-thm}. 

Throughout this paper, we will use $C$ to denote generic positive constants and $C$ may take different values within a calculation.


\section{Preliminaries}

First, we recall some properties on functions in uniformly local Lebesgue space. 

\begin{lem}[\text{\cite[Lemma~2.8]{MS21}}]\label{Lulmax}
  Let $A>0$ be a constant and $\phi \in \cL^{1}_{\rm ul}(\RN)$. Then, $\max\{\phi, A\} \in \cL^{1}_{\rm ul}(\RN)$. 
\end{lem}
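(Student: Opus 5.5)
Since $\phi\in\cL^{1}_{\rm ul}(\RN)$, we approximate $\phi$ in $L^1_{\rm ul}(\RN)$ by functions in $BUC(\RN)$ and then push the approximation through the map $\psi\mapsto\max\{\psi,A\}$. Concretely, we pick a sequence $\{\psi_n\}\subset BUC(\RN)$ with $\|\psi_n-\phi\|_{L^{1}_{\rm ul}(\RN)}\to0$ as $n\to\infty$. The candidate approximants for $\max\{\phi,A\}$ are $\max\{\psi_n,A\}$.

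First we check that these approximants lie in $BUC(\RN)$. The function $g(s)=\max\{s,A\}$ is Lipschitz on $\R$ with constant $1$, since $|\max\{a,A\}-\max\{b,A\}|\le|a-b|$ for all real $a,b$. Composing a uniformly continuous function with a $1$-Lipschitz function gives a uniformly continuous function. Boundedness is also preserved, because $A\le\max\{\psi_n,A\}\le\max\{\|\psi_n\|_\infty,A\}$. So $\max\{\psi_n,A\}\in BUC(\RN)$.

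Next we check that $\max\{\phi,A\}$ belongs to $L^1_{\rm ul}(\RN)$ at all. Pointwise $|\max\{\phi,A\}|\le|\phi|+A$, so for every $y$,
\begin{equation*}
\int_{B(y,1)}|\max\{\phi,A\}|\,dx\le\int_{B(y,1)}|\phi|\,dx+A|B(0,1)|.
\end{equation*}
Taking the supremum over $y$ gives $\max\{\phi,A\}\in L^1_{\rm ul}(\RN)$.

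Finally, the same Lipschitz inequality applied pointwise gives $|\max\{\psi_n,A\}-\max\{\phi,A\}|\le|\psi_n-\phi|$ a.e. We integrate this over $B(y,1)$ and take the supremum in $y$, obtaining
\begin{equation*}
\|\max\{\psi_n,A\}-\max\{\phi,A\}\|_{L^{1}_{\rm ul}(\RN)}\le\|\psi_n-\phi\|_{L^{1}_{\rm ul}(\RN)}\to0 .
\end{equation*}
Hence $\max\{\phi,A\}$ lies in the $L^1_{\rm ul}$-closure of $BUC(\RN)$, which is $\cL^1_{\rm ul}(\RN)$ by definition. There is no real obstacle here; the only step that needs care is confirming that the truncation preserves both boundedness and uniform continuity, and the $1$-Lipschitz property handles both.
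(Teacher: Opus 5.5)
Your proof is correct and complete. You approximate $\phi$ by $\psi_n\in BUC(\RN)$ and use that $s\mapsto\max\{s,A\}$ is $1$-Lipschitz, which both preserves $BUC(\RN)$ and gives $\|\max\{\psi_n,A\}-\max\{\phi,A\}\|_{L^{1}_{\rm ul}(\RN)}\le\|\psi_n-\phi\|_{L^{1}_{\rm ul}(\RN)}$; this is the standard argument. The paper gives no proof of its own and only cites \cite[Lemma~2.8]{MS21}, so there is nothing further to compare (your separate check that $\max\{\phi,A\}\in L^{1}_{\rm ul}(\RN)$ is harmless but redundant, since it already follows from the final estimate and the triangle inequality).
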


We introduce estimates for heat semi-group in the space $L^{r}_{\rm ul}(\RN)$. 

\begin{prop}\label{LpLqul}
  \begin{itemize}
    \item[$\mathrm{(i)}$] Let $N\ge 1$ and $1\le p \le q \le \infty$. Then, there exists a constant $C>0$ such that, for $\phi \in L^{p}_{\rm ul}(\RN)$, 
    \begin{equation*}
      \|e^{t\Delta} \phi\|_{L^{q}_{\rm ul}(\RN)} \le C \left(1+t^{-\frac{N}{2} \left(\frac{1}{p} - \frac{1}{q} \right)} \right) \|\phi\|_{L^{p}_{\rm ul}(\RN)} \quad \text{for all }~t>0. 
    \end{equation*}
    In particular, 
    \begin{equation*}
      \|e^{t\Delta} \phi\|_{L^{q}_{\rm ul}(\RN)} \le C t^{-\frac{N}{2} \left(\frac{1}{p} - \frac{1}{q} \right)} \|\phi\|_{L^{p}_{\rm ul}(\RN)} \quad \text{for }~0<t\le 1. 
    \end{equation*}
    \item[$\mathrm{(ii)}$] Let $N\ge 1$ and $1\le p < q \le \infty$. Then, for any $\phi \in \cL^{p}_{\rm ul}(\RN)$ and $\varepsilon > 0$, there exists $t_{\varepsilon}>0$ such that 
    \begin{equation*}
      \|e^{t\Delta} \phi\|_{L^{q}_{\rm ul}(\RN)} \le \varepsilon t^{-\frac{N}{2} \left(\frac{1}{p} - \frac{1}{q} \right)} \quad \text{for}~~0<t<t_{\varepsilon}. 
    \end{equation*}
  \end{itemize}
\end{prop}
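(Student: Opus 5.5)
The plan is to derive both parts from pointwise heat-kernel bounds combined with a covering of $B(y,1)$ by unit balls. For part (i), the first step is the standard decomposition. Fix $y\in\RN$ and $x\in B(y,1)$, and write
\begin{equation*}
[e^{t\Delta}\phi](x)=\sum_{k\in\mathbb{Z}^N}\int_{Q_k}G(x-z,t)\phi(z)\,dz ,
\end{equation*}
where $Q_k$ is a unit cube centred at $y+k$. For $|k|\ge 3$ one has $|x-z|\ge c|k|$ on $Q_k$, so $G(x-z,t)\le Ct^{-N/2}e^{-c|k|^2/t}$. The case $t\ge1$ is easy: it reduces to the bound $\|e^{t\Delta}\phi\|_{L^\infty}\le C\|\phi\|_{L^1_{\rm ul}}$. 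To see this, note that $\sup_{x}\sum_k\sup_{z\in Q_k}G(x-z,t)\le C$ uniformly for $t\ge1$ (a Riemann-sum bound for a Gaussian). The same bound also controls every $L^q(B(y,1))$ norm. For $0<t\le1$, split the kernel as $G=G\chi_{\{|x-z|<3\sqrt N\}}+G\chi_{\{|x-z|\ge3\sqrt N\}}$.

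The near part acts on $\phi\chi_{B(y,R)}$ with $R=1+3\sqrt N$. Apply the whole-space $L^p$--$L^q$ estimate, i.e.\ Young's inequality with $\|G(\cdot,t)\|_{s}\le Ct^{-\frac N2(1-\frac1s)}$ and $1+\frac1q=\frac1s+\frac1p$. This gives $Ct^{-\frac N2(\frac1p-\frac1q)}\|\phi\chi_{B(y,R)}\|_p$. The last factor is at most $C_N\|\phi\|_{L^p_{\rm ul}}$, because $B(y,R)$ is covered by finitely many unit balls. In the far part, $G\le Ce^{-c|x-z|^2}$ uniformly in $t\le1$, so the far part is bounded by $C\|\phi\|_{L^1_{\rm ul}}\le C\|\phi\|_{L^p_{\rm ul}}$. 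Summing the two parts gives $C(1+t^{-\frac N2(\frac1p-\frac1q)})$, and for $t\le1$ the constant 1 is absorbed into the power.

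For part (ii), approximate $\phi$ by $\psi\in BUC(\RN)$ with $\|\phi-\psi\|_{L^p_{\rm ul}}<\eta$. By (i), for $0<t\le1$,
\begin{equation*}
\|e^{t\Delta}\phi\|_{L^q_{\rm ul}}\le Ct^{-\frac N2(\frac1p-\frac1q)}\eta+\|e^{t\Delta}\psi\|_{L^q_{\rm ul}}\le Ct^{-\frac N2(\frac1p-\frac1q)}\eta+C\|\psi\|_\infty .
\end{equation*}
Choose $\eta=\varepsilon/(2C)$. The exponent $\frac N2(\frac1p-\frac1q)$ is strictly positive since $p<q$, so $t^{-\frac N2(\frac1p-\frac1q)}\to\infty$ as $t\to0$. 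Hence we can pick $t_\varepsilon$ with $C\|\psi\|_\infty\le\frac\varepsilon2 t^{-\frac N2(\frac1p-\frac1q)}$ for $t<t_\varepsilon$, and this yields (ii).

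The main technical point is the near/far splitting in (i). One must make sure that the near part really only sees $\phi$ on a bounded enlargement of $B(y,1)$, so that only finitely many unit balls (a number depending on $N$) are involved. One must also check that the far Gaussian tail sums uniformly in $t\le1$. Everything else is routine. Alternatively, one may simply cite the known estimates of Maekawa--Terasawa or Arrieta et al.\ for $L^p_{\rm ul}$.
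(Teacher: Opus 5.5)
Your proof is correct. It differs from the paper only in that the paper gives no argument at all: it cites Arrieta et al.\ \cite{ARCD04}, Maekawa--Terasawa \cite{MT06} and Giraudon--Miyamoto \cite{GM22}.

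What you wrote is essentially a self-contained version of the standard argument in those references.
\begin{itemize}
\item[$\bullet$] The near/far split of the kernel at distance $3\sqrt N$ reduces the local part to the whole-space Young inequality applied to $|\phi|\chi_{B(y,1+3\sqrt N)}$. Its $L^p$ norm is controlled by finitely many unit balls.
\item[$\bullet$] On the far region the kernel is bounded by $Ce^{-|x-z|^2/8}$ uniformly in $t\le 1$. The far part is therefore bounded by $C\|\phi\|_{L^1_{\rm ul}}\le C\|\phi\|_{L^p_{\rm ul}}$.
\item[$\bullet$] The regime $t\ge 1$ follows from the uniform bound $\sum_k\sup_{z\in Q_k}G(x-z,t)\le C$.
\item[$\bullet$] Part (ii) follows from the density of $BUC(\RN)$, the estimate in (i), and $\|e^{t\Delta}\psi\|_{L^q_{\rm ul}}\le C\|\psi\|_\infty$. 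The strict positivity of $\frac N2(\frac1p-\frac1q)$ is exactly where $p<q$ is needed. This density step is also how the cited works obtain the small-time estimate in $\cL^p_{\rm ul}$.
\end{itemize}

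The citation buys brevity, and through \cite{GM22} it also covers fractional heat semigroups. Your route buys transparency: it shows that nothing beyond Gaussian decay, Young's inequality and a finite covering by unit balls is used.

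One harmless slip: the claim that $|x-z|\ge c|k|$ on $Q_k$ whenever $|k|\ge 3$ fails for large $N$. You would need, say, $|k|\ge 2+\sqrt N$, which gives $|x-z|\ge |k|/2$. You never actually use this claim, since your estimates run through the distance-based split.
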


\begin{proof}
  For a proof of Proposition \ref{LpLqul}, see \cite[Proposition 2.1]{ARCD04}, \cite[Corollary 3.1]{MT06} and \cite[Proposition 2.4 and 2.5]{GM22}. 
\end{proof}

\begin{prop}\label{Jensen}
  Let $\zeta\ge 0$ be a constant. 
  \begin{itemize}
    \item[$\mathrm{(i)}$] Suppose that $H:[\zeta, \infty) \to [0, \infty)$ is a convex function. If $\phi \in L^{1}_{\rm ul}(\RN)$, $H(\phi) \in L^{1}_{\rm ul}(\RN)$ and $\phi \ge \zeta$ in $\RN$, then
    \begin{equation*}
      H(e^{t\Delta} \phi) \le e^{t\Delta} H(\phi) \quad \text{in}~~\RN \times (0, \infty). 
    \end{equation*}
    \item[$\mathrm{(ii)}$] Suppose that $K:[\zeta, \infty) \to [0, \infty)$ is a concave function. If $\phi \in L^{1}_{\rm ul}(\RN)$ and $\phi \ge \zeta$ in $\RN$, then
    \begin{equation*}
      K(e^{t\Delta} \phi) \ge e^{t\Delta} K(\phi) \quad \text{in}~~\RN \times (0, \infty). 
    \end{equation*}
  \end{itemize}
\end{prop}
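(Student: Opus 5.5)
This is Jensen's inequality applied pointwise. For fixed $(x,t)\in\RN\times(0,\infty)$, the heat kernel gives a probability measure
\begin{equation*}
  d\mu_{x,t}(y) := G(x-y,t)\,dy,
\end{equation*}
since $G>0$ and $\int_{\RN}G(x-y,t)\,dy=1$. With this notation, $[e^{t\Delta}\phi](x)=\int_{\RN}\phi\,d\mu_{x,t}$, and similarly for $H(\phi)$ and $K(\phi)$.

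First I check that every quantity is finite. A function in $L^{1}_{\rm ul}(\RN)$ is integrable against the Gaussian $G(x-\cdot,t)$. To see this, cover $\RN$ by unit balls $B(y_k,1)$ centred on a lattice and bound $\sup_{B(y_k,1)}G(x-\cdot,t)$ by a summable sequence in $k$. This gives $\int_{\RN} |\phi|\,d\mu_{x,t}\le C(t)\|\phi\|_{L^{1}_{\rm ul}}<\infty$, in the same way as in Proposition~\ref{LpLqul}. Applying it to $\phi$, and to $H(\phi)$ in case (i), shows that $e^{t\Delta}\phi$ and $e^{t\Delta}H(\phi)$ are finite. Moreover $\phi\ge\zeta$ a.e. and $\mu_{x,t}$ is a probability measure, so $e^{t\Delta}\phi\ge\zeta$. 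Hence $H(e^{t\Delta}\phi)$ and $K(e^{t\Delta}\phi)$ are defined.

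For (i), I use the supporting-line characterisation of convexity. Set $m:=[e^{t\Delta}\phi](x)\in[\zeta,\infty)$. If $m>\zeta$, then $m$ lies in the interior of the domain, so there is a slope $a\in\R$ with
\begin{equation*}
  H(s)\ge H(m)+a(s-m)\quad\text{for all } s\ge\zeta .
\end{equation*}
I substitute $s=\phi(y)$ and integrate against $\mu_{x,t}$. The linear term integrates to zero, which gives $e^{t\Delta}H(\phi)(x)\ge H(m)$. If $m=\zeta$, then $\phi=\zeta$ $\mu_{x,t}$-a.e. because $G>0$, and both sides equal $H(\zeta)$.

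The only delicate point is the boundary of $[\zeta,\infty)$, where a convex function may jump up at $\zeta$ and a one-sided slope may be infinite. The degenerate case $m=\zeta$ above handles this, so I do not expect a genuine obstacle.

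For (ii), $K$ is concave, so the same argument applies with reversed inequalities. This gives $K(m)\ge\int K(\phi)\,d\mu_{x,t}$. Here $K\ge0$ makes the right-hand side well defined in $[0,\infty]$, and it is in fact finite because it is bounded by $K(m)$. No integrability hypothesis on $K(\phi)$ is needed: concavity gives $K(s)\le K(m)+a(s-m)$, whose right-hand side is integrable. Equivalently, one can apply (i) to the convex function $-K$ plus an affine majorant.
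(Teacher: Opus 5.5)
Your proposal is correct and is essentially the paper's argument: the paper only says the result follows from Jensen's inequality for the probability measure $G(x-y,t)\,dy$ and refers to \cite[Proposition~2.9]{GM22}. You carry out that Jensen step explicitly via supporting lines, and you also handle the finiteness of $e^{t\Delta}\phi$ and the boundary case $[e^{t\Delta}\phi](x)=\zeta$, which the paper leaves to the reference.
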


\begin{proof}
  It follows from Jensen's inequality. See \cite[Proposition~2.9]{GM22} for a proof. 
\end{proof}

Next, we give a key lemma to prove Theorem~\ref{Exist-thm}. See \cite[Lemma~4.1]{CGL22}. 

\begin{lem}[\text{\cite[Lemma~4.1]{CGL22}}]\label{keylem1}
  Suppose that $\phi \in L^{1}_{\rm ul}(\RN)$, $\phi \ge 0$ and $0 < \alpha < N$. Then, there exists a constant $C=C(N,\gamma)>0$ such that
  \begin{equation*}
    e^{t\Delta}\left[|\cdot|^{–\alpha} e^{s\Delta} \phi \right] \le C \left(\frac{1}{t} + \frac{1}{s} \right)^{\frac{\alpha}{2}} e^{(t+s)\Delta} \phi\quad \text{for}~~t, s > 0. 
  \end{equation*}
\end{lem}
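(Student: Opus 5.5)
Since the claim is pointwise and both sides are linear in $\phi\ge 0$, I would write everything as kernel integrals and prove a kernel inequality. By the semigroup property the right-hand side is $\int G(x-z,t+s)\phi(z)\,dz$. By Tonelli the left-hand side is
\begin{equation*}
  \int_{\RN} \left( \int_{\RN} G(x-y,t)\,|y|^{-\alpha}\,G(y-z,s)\,dy \right) \phi(z)\,dz .
\end{equation*}
Hence it suffices to show that for all $x,z\in\RN$ and $t,s>0$,
\begin{equation*}
  I(x,z):=\int_{\RN} G(x-y,t)\,|y|^{-\alpha}\,G(y-z,s)\,dy \le C\left(\frac1t+\frac1s\right)^{\frac{\alpha}{2}} G(x-z,t+s).
\end{equation*}
The hypotheses $\phi\in L^1_{\rm ul}$ and $\phi\ge0$ only ensure that the integrals are finite or nonnegative.

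The first step is the Gaussian product identity obtained by completing the square in $y$:
\begin{equation*}
  G(x-y,t)\,G(y-z,s)=G(x-z,t+s)\,G(y-m,\tau),
\end{equation*}
where $\tau=\frac{ts}{t+s}$ and $m=\frac{sx+tz}{t+s}$. This is the standard fact that the Brownian bridge is Gaussian, and I would check it directly from the exponents. Then
\begin{equation*}
  I(x,z)=G(x-z,t+s)\,\bigl[e^{\tau\Delta}|\cdot|^{-\alpha}\bigr](m).
\end{equation*}
Since $\tau^{-1}=t^{-1}+s^{-1}$, it remains to prove the uniform bound
\begin{equation*}
  \sup_{m\in\RN}\bigl[e^{\tau\Delta}|\cdot|^{-\alpha}\bigr](m)\le C\,\tau^{-\alpha/2}.
\end{equation*}

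For this bound I would use scaling: the change of variables $y=\sqrt{\tau}\,w$ reduces the claim to $\tau=1$. At $\tau=1$, split the integral into $|y|<1$ and $|y|\ge 1$. On the outer region $|y|^{-\alpha}\le1$ and $\int G=1$. On the inner region $G\le(4\pi)^{-N/2}$ and $\int_{|y|<1}|y|^{-\alpha}\,dy<\infty$ because $\alpha<N$. This gives $C=C(N,\alpha)$, independent of $m$. Alternatively, the rearrangement inequality shows that the supremum is attained at $m=0$, where the value is an explicit Gamma-function constant times $\tau^{-\alpha/2}$.

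I do not expect a genuine obstacle. The only points needing care are the algebra of the product identity and making sure the constant does not depend on $m$. The latter is exactly why the local integrability $\alpha<N$ is needed.
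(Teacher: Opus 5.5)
Your proof is correct and complete. The Tonelli reduction to a kernel inequality is valid, and so is the bridge identity $G(x-y,t)G(y-z,s)=G(x-z,t+s)\,G(y-m,\tau)$ with $\tau=ts/(t+s)$ and $m=(sx+tz)/(t+s)$; the exponents check, and the prefactors match because $ts=(t+s)\tau$. The uniform bound $\sup_{m}[e^{\tau\Delta}|\cdot|^{-\alpha}](m)\le C(N,\alpha)\,\tau^{-\alpha/2}$, obtained by scaling and splitting at $|w|=1$ using $\alpha<N$, is exactly what is needed.

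The paper gives no proof of this lemma and only cites \cite[Lemma~4.1]{CGL22}, so there is no in-paper argument to compare with. One remark: as your argument shows, the constant depends on $N$ and $\alpha$, so the $C(N,\gamma)$ in the statement should be read as $C(N,\alpha)$. This matters because the paper later applies the lemma with $\alpha=\gamma/\theta$.
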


We next give an estimate of $f(F^{-1}(\cdot))$. 

\begin{lem}\label{keylemofkeyprop}
  Let $f \in S_{q}$ with $q \in [1, \infty)$. Then, for any $\varepsilon>0$, there exists $\delta_{\varepsilon}>0$ such that 
  \begin{equation*}
    \rho^{-(q-\varepsilon)} f\left(F^{-1}(A) \right) \le f\left(F^{-1}(A\rho) \right) \le \rho^{-(q+\varepsilon)} f\left(F^{-1}(A) \right)
  \end{equation*}
  for $A \in (0, \delta_{\varepsilon})$ and $\rho\in (0, 1)$. 
\end{lem}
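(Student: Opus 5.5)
We set $g(A):=f(F^{-1}(A))$ for $A\in(0,\infty)$. Since $F$ is a decreasing bijection from $(0,\infty)$ onto $(0,F(0^+))$ with $F(\xi)\to0$ as $\xi\to\infty$, small $A$ corresponds to large $\xi=F^{-1}(A)$. The plan is to compute the logarithmic derivative of $g$ and integrate it between $A\rho$ and $A$. Since $(F^{-1})'(A)=1/F'(F^{-1}(A))=-f(F^{-1}(A))$, the chain rule gives
\begin{equation*}
  \frac{d}{dA}\log g(A)=\frac{f'(F^{-1}(A))\,(F^{-1})'(A)}{f(F^{-1}(A))}=-f'(F^{-1}(A)).
\end{equation*}
Writing $\xi=F^{-1}(A)$, so that $F(\xi)=A$, we obtain
\begin{equation*}
  \frac{d\log g}{d\log A}=A\,\frac{d}{dA}\log g(A)=-f'(\xi)F(\xi).
\end{equation*}

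Next we use the hypothesis $f\in S_q$, which says $f'(\xi)F(\xi)\to q$ as $\xi\to\infty$. Given $\varepsilon>0$, choose $\xi_\varepsilon$ such that $|f'(\xi)F(\xi)-q|\le\varepsilon$ for all $\xi\ge\xi_\varepsilon$, and set $\delta_\varepsilon:=F(\xi_\varepsilon)$. For every $B\in(0,\delta_\varepsilon)$ we then have $F^{-1}(B)>\xi_\varepsilon$, hence
\begin{equation*}
  -(q+\varepsilon)\le \frac{d\log g}{d\log A}(B)\le -(q-\varepsilon).
\end{equation*}

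Now fix $A\in(0,\delta_\varepsilon)$ and $\rho\in(0,1)$. The whole interval $[A\rho,A]$ lies in $(0,\delta_\varepsilon)$, so we may integrate the last inequality in the variable $\log B$ over $[\log(A\rho),\log A]$, an interval of length $\log(1/\rho)>0$. This yields
\begin{equation*}
  (q-\varepsilon)\log\frac1\rho\le \log g(A\rho)-\log g(A)\le (q+\varepsilon)\log\frac1\rho.
\end{equation*}
Exponentiating gives exactly $\rho^{-(q-\varepsilon)}g(A)\le g(A\rho)\le\rho^{-(q+\varepsilon)}g(A)$, which is the claimed estimate.

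The only delicate point is justifying the derivative computation. It requires $f\in C^1((0,\infty))$ and $f>0$, both of which hold by (f), so that $F$ is $C^1$ and strictly decreasing with $F'=-1/f$. These give that $F^{-1}$ is $C^1$ on the relevant range and that $\log g$ is well defined and differentiable. Nothing else is needed; the convergence $f'F\to q$ is used only to pin down the threshold $\delta_\varepsilon$.
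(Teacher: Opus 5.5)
Your proof is correct and is essentially the paper's argument. The paper likewise sets $h(\xi)=f(F^{-1}(\xi))$, computes $\xi h'(\xi)/h(\xi)=-f'(F^{-1}(\xi))F(F^{-1}(\xi))\to -q$ as $\xi\to 0$, and integrates the resulting two-sided bound on $-h'/h$ over $(A\rho,A)$; you make the choice $\delta_\varepsilon=F(\xi_\varepsilon)$ explicit and write the integration in the variable $\log B$, which is the same computation.
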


\begin{proof}
  Fix $\varepsilon>0$ arbitrarily and we define 
  \begin{equation*}
    h(\xi) := f\left(F^{-1}(\xi) \right), \quad \xi>0. 
  \end{equation*}
  Since
  \begin{equation*}
    h'(\xi) = -f'\left(F^{-1}(\xi)\right) f\left(F^{-1}(\xi)\right) = -f'\left(F^{-1}(\xi)\right) h(\xi)
  \end{equation*}
  for $\xi>0$, we have 
  \begin{equation*}
    \frac{\xi h'(\xi)}{h(\xi)} = -\xi f'\left(F^{-1}(\xi)\right) = -f'\left(F^{-1}(\xi)\right) F\left(F^{-1}(\xi)\right)
  \end{equation*}
  for $\xi>0$. Then, we obtain
  \begin{equation*}
    \lim_{\xi\to0} \frac{\xi h'(\xi)}{h(\xi)} = -q. 
  \end{equation*}
  Thus, there exists $\delta_{\varepsilon}>0$ such that
  \begin{equation*}
    \frac{q-\varepsilon}{\xi} \le -\frac{h'(\xi)}{h(\xi)} \le \frac{q+\varepsilon}{\xi}
  \end{equation*}
  for $\xi \in (0, \delta_{\varepsilon})$. Integrating on $(A\rho, A)$ for $A\in (0, \delta_{\varepsilon})$ and $\rho\in(0, 1)$, we have
  \begin{equation*}
    \log{\rho^{-(q-\varepsilon)}} \le \log{\frac{h(A\rho)}{h(A)}} \le \log{\rho^{-(q+\varepsilon)}}, 
  \end{equation*}
  that is, 
  \begin{equation*}
    \rho^{-(q-\varepsilon)} h(A) \le h(A\rho) \le \rho^{-(q+\varepsilon)} h(A). 
  \end{equation*}
  By the definition of $h$, we have
  \begin{equation*}
    \rho^{-(q-\varepsilon)} f\left(F^{-1}(A) \right) \le f\left(F^{-1}(A\rho)\right) \le \rho^{-(q+\varepsilon)} f\left(F^{-1}(A)\right)
  \end{equation*}
  for $A\in (0, \delta_{\varepsilon})$ and $\rho \in (0, 1)$. Thus, this completes the proof of Lemma \ref{keylemofkeyprop}. 
\end{proof}

We next prove the following proposition which is a crucial estimate for a proof of Proposition~\ref{MainProp} (ii). 

\begin{prop}\label{keyprop}
  Let $N\ge 1$ and $0<\gamma<\min\{2, N\}$. Let $f \in S_{q}$ with $q \in [1, \infty)$ and $\theta>(2-\gamma)(q-1)/N$. Then, 
  \begin{equation*}
    \limsup_{\eta\to\infty} \frac{1}{f(\eta)} \int_{0}^{1} \sigma^{\frac{N}{2} \left(\theta+\frac{2-\gamma}{N} \right)-1} (1-\sigma)^{-\frac{\gamma}{2}} f\left(F^{-1}\left(F(\eta) \sigma^{\frac{2-\gamma}{2}} \right) \right) \,d\sigma < \infty. 
  \end{equation*}
\end{prop}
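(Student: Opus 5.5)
We apply Lemma~\ref{keylemofkeyprop} with $A=F(\eta)$ and $\rho=\sigma^{(2-\gamma)/2}\in(0,1)$. Since $F$ is decreasing and $F(\eta)\to0$ as $\eta\to\infty$, for any fixed $\varepsilon>0$ there is $\eta_\varepsilon$ such that $F(\eta)<\delta_\varepsilon$ for all $\eta\ge\eta_\varepsilon$. Note that $F^{-1}(F(\eta))=\eta$. The upper bound in Lemma~\ref{keylemofkeyprop} then gives
\begin{equation*}
  f\left(F^{-1}\left(F(\eta)\sigma^{\frac{2-\gamma}{2}}\right)\right) \le \sigma^{-\frac{2-\gamma}{2}(q+\varepsilon)} f(\eta)
  \quad \text{for all}~~\sigma\in(0,1),~\eta\ge\eta_\varepsilon.
\end{equation*}
The bound is uniform in $\sigma\in(0,1)$, so no splitting of the interval is needed. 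Dividing by $f(\eta)$, the quantity inside the $\limsup$ is bounded, for $\eta\ge\eta_\varepsilon$, by
\begin{equation*}
  \int_0^1 \sigma^{\frac{N}{2}\theta+\frac{2-\gamma}{2}-1-\frac{2-\gamma}{2}(q+\varepsilon)}(1-\sigma)^{-\frac{\gamma}{2}}\,d\sigma ,
\end{equation*}
which is independent of $\eta$. This is a Beta integral $B(a,1-\gamma/2)$ with
\begin{equation*}
  a=\frac{N}{2}\theta-\frac{2-\gamma}{2}(q-1)-\frac{2-\gamma}{2}\varepsilon .
\end{equation*}

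It remains to check that this Beta integral is finite. Near $\sigma=1$ it converges because $\gamma<2$, so $1-\gamma/2>0$. Near $\sigma=0$ it converges if $a>0$. The hypothesis $\theta>(2-\gamma)(q-1)/N$ says exactly that $\frac N2\theta-\frac{2-\gamma}{2}(q-1)>0$, so choosing
\begin{equation*}
  0<\varepsilon<\frac{N\theta}{2-\gamma}-(q-1)
\end{equation*}
makes $a>0$. With this $\varepsilon$ the $\limsup$ is at most $B(a,1-\gamma/2)<\infty$.

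The only delicate point is the choice of $\varepsilon$. It must be fixed before $\eta_\varepsilon$ is chosen, and it must be small enough to preserve the strict inequality on $\theta$. Otherwise the argument is a direct consequence of Lemma~\ref{keylemofkeyprop}.
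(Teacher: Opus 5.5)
Your proposal is correct and matches the paper's proof essentially step for step. The paper also fixes $\varepsilon$ with $q+\varepsilon<\frac{N}{2-\gamma}\theta+1$ and applies Lemma~\ref{keylemofkeyprop} with $A=F(\eta)$, $\rho=\sigma^{(2-\gamma)/2}$ to get a bound uniform in $\sigma$, then concludes from finiteness of the resulting Beta-type integral, using $\gamma<2$ at $\sigma=1$ and the positive exponent at $\sigma=0$.
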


\begin{proof}
  Let $\varepsilon>0$ satisfy 
  \begin{equation*}
    \frac{N}{2-\gamma} \theta + 1 > q+\varepsilon. 
  \end{equation*}
  By Lemma \ref{keylemofkeyprop}, there exists $\delta_{\varepsilon}>0$ such that 
  \begin{equation*}
    f\left(F^{-1}(F(\eta) \sigma^{\frac{2-\gamma}{\gamma}}) \right) \le \sigma^{-\frac{(2-\gamma)}{2} (q+\varepsilon)} f(\eta)
  \end{equation*}
  for $\sigma \in (0,1)$ and $\eta>0$ with $F(\eta) \in (0, \delta_{\varepsilon})$. Then, we have
  \begin{align*}
    \frac{1}{f(\eta)} \int_{0}^{1} &\sigma^{\frac{N}{2} \left(\theta+\frac{2-\gamma}{N} \right)-1} (1-\sigma)^{-\frac{\gamma}{2}} f\left(F^{-1}\left(F(\eta) \sigma^{\frac{2-\gamma}{2}} \right) \right) \,d\sigma \\
    &\le \int_{0}^{1} \sigma^{\frac{N}{2} \left(\theta+\frac{2-\gamma}{N} \right) -\frac{(2-\gamma)}{2} (q+\varepsilon)-1} (1-\sigma)^{-\frac{\gamma}{2}}  \,d\sigma 
  \end{align*}
  for $\eta \in (F^{-1}(\delta_{\varepsilon}), \infty)$. Since $\gamma/2 < 1$ and
  \begin{equation*}
    \frac{N}{2} \left(\theta+\frac{2-\gamma}{N} \right) -\frac{(2-\gamma)}{2} (q+\varepsilon) > \frac{N}{2} \theta +\frac{2-\gamma}{2} -\frac{(2-\gamma)}{2} \left(\frac{N}{2-\gamma} \theta + 1 \right) = 0, 
  \end{equation*}
  we obtain
  \begin{equation*}
    \limsup_{\eta\to\infty} \frac{1}{f(\eta)} \int_{0}^{1} \sigma^{\frac{N}{2} \left(\theta+\frac{2-\gamma}{N} \right)-1} (1-\sigma)^{-\frac{\gamma}{2}} f\left(F^{-1}\left(F(\eta) \sigma^{\frac{2-\gamma}{2}} \right) \right) \,d\sigma<\infty. 
  \end{equation*}
  This completes the proof of Proposition~\ref{keyprop}. 
\end{proof}

We give an estimate of $F$ and $F^{-1}$ needed to show Theorem~\ref{Nonexist-thm}. 

\begin{lem}\label{estofF}
  Let $f \in S_{q}$ with $q\in [1, \infty)$. Then, for any $\varepsilon>0$, there exist $\zeta_{\varepsilon}>0$, $\sigma_{\varepsilon}>0$ and $C=C_{\varepsilon}>0$ such that
  \begin{equation*}
    F(\xi) \le C \xi^{-\frac{1}{q-1+\varepsilon}}\quad \text{for}~~\xi\ge\zeta_{\varepsilon}, 
  \end{equation*}
  and
  \begin{equation*}
    F^{-1}(\sigma) \le C \sigma^{-(q-1+\varepsilon)} \quad \text{for}~~\sigma\in(0, \sigma_{\varepsilon}). 
  \end{equation*}
\end{lem}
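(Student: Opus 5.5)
The plan is to mirror the proof of Lemma~\ref{keylemofkeyprop}, but working with $F$ itself instead of $f\circ F^{-1}$.

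\textbf{Step 1: a differential inequality for $F$.} Since $F'(\xi)=-1/f(\xi)$, we have
\begin{equation*}
  \frac{d}{d\xi}\left(\frac{1}{F(\xi)}\right)=\frac{1}{f(\xi)F(\xi)^{2}} .
\end{equation*}
The key identity is
\begin{equation*}
  \frac{d}{d\xi}\bigl(f(\xi)F(\xi)\bigr)=f'(\xi)F(\xi)-1 .
\end{equation*}
Since $f'(\xi)F(\xi)\to q$, for any $\varepsilon>0$ there is $\zeta_0$ such that $f'F\le q+\varepsilon$ for $\xi\ge\zeta_0$. Integrating the identity on $(\zeta_0,\xi)$ gives
\begin{equation*}
  f(\xi)F(\xi)\le C+(q-1+\varepsilon)(\xi-\zeta_0)\le C'\xi \quad\text{for } \xi\ge\zeta_1 .
\end{equation*}
Here I use $q-1+\varepsilon>0$, which holds since $q\ge1$.

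\textbf{Step 2: a power bound for $F$ via a log-derivative.} Step 1 alone bounds $fF$ linearly, but it does not yet give a power decay of $F$. A cleaner route is to look at $\xi F'(\xi)/F(\xi)=-\xi/(f(\xi)F(\xi))$. If $fF\le (q-1+\varepsilon)\xi+C$, then
\begin{equation*}
  -\frac{F'}{F}\ge \frac{1}{(q-1+\varepsilon)\xi+C}\ge \frac{1}{(q-1+2\varepsilon)\xi}
\end{equation*}
for large $\xi$. Integrating from $\zeta_\varepsilon$ to $\xi$ gives
\begin{equation*}
  \log\frac{F(\xi)}{F(\zeta_\varepsilon)}\le -\frac{1}{q-1+2\varepsilon}\log\frac{\xi}{\zeta_\varepsilon} .
\end{equation*}
Hence $F(\xi)\le C\xi^{-1/(q-1+2\varepsilon)}$. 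Renaming $\varepsilon$ yields the first estimate.

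\textbf{Step 3: the bound for $F^{-1}$.} This follows by inversion. Put $\xi=F^{-1}(\sigma)$ with $\sigma$ small, so that $\xi\ge\zeta_\varepsilon$ because $F$ is decreasing and $F(\xi)\to0$ as $\xi\to\infty$. Then $\sigma\le C\xi^{-1/(q-1+\varepsilon)}$, i.e.
\begin{equation*}
  \xi\le C^{q-1+\varepsilon}\sigma^{-(q-1+\varepsilon)} .
\end{equation*}
We take $\sigma_\varepsilon=F(\zeta_\varepsilon)$ and enlarge the constant $C$.

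\textbf{Main obstacle.} The delicate point is Step 1 when $q=1$: the coefficient $q-1+\varepsilon$ is then only $\varepsilon$, and the additive constant $C$ must be absorbed. This is handled by allowing $2\varepsilon$ instead of $\varepsilon$ in the bound for $\xi\ge\zeta_\varepsilon$ large. It works because $fF\to\infty$ is not needed: the bound $fF\le (q-1+\varepsilon)\xi+C\le (q-1+2\varepsilon)\xi$ already holds once $\xi\ge C/\varepsilon$.
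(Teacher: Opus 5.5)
Your proof is correct, but it organizes the two integrations differently from the paper. The paper rewrites $f'F\le q+\varepsilon$ as $F''F\le (q+\varepsilon)(F')^{2}$ (using $F''=f'(F')^{2}$). It integrates $(q+\varepsilon)F'/F\le F''/F'$ to get $-F'\ge cF^{q+\varepsilon}$; equivalently, $fF^{q+\varepsilon}$ is nonincreasing for $\xi\ge\zeta_\varepsilon$. It then integrates $\frac{d}{d\xi}F^{-(q-1+\varepsilon)}\ge c(q-1+\varepsilon)$ to get $F^{-(q-1+\varepsilon)}\ge c\xi$ for large $\xi$. The additive constant from that last integration is absorbed into the multiplicative constant, so no exponent is lost.

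You instead use the first-order identity $(fF)'=f'F-1$, i.e.\ you observe that $fF-(q-1+\varepsilon)\xi$ is nonincreasing, and then integrate the log-derivative $-(\log F)'=1/(fF)$. This isolates the natural quantity $fF/\xi$, which in fact tends to $q-1$, and makes the decay rate of $F$ transparent. The price is the additive constant in $fF\le (q-1+\varepsilon)\xi+C$, which you absorb by passing to $2\varepsilon$. That is harmless since $\varepsilon$ is arbitrary, and it is also avoidable: with $a=q-1+\varepsilon$ and $C\ge 0$, $\int_{\zeta}^{\xi}\frac{d\tau}{a\tau+C}=\frac{1}{a}\log\frac{a\xi+C}{a\zeta+C}$ already gives $F(\xi)\le C'\xi^{-1/a}$. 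The inversion step for $F^{-1}$ is the same in both arguments.
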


\begin{proof}
  Let $\varepsilon>0$. Since $f'(\xi) F(\xi) \to q$ as $\xi\to\infty$, there exists $\zeta_{\varepsilon}>0$ such that
  \begin{equation*}
    f'(\xi) F(\xi) \le q+\varepsilon \quad \text{for}~~\xi\ge\zeta_{\varepsilon}. 
  \end{equation*}
  Since $F''(\xi) = f'(\xi) (F'(\xi))^{2}$ and $F'(\xi) < 0$ for $\xi\ge0$, we have
  \begin{equation*}
    \frac{F''(\xi)F(\xi)}{(F'(\xi))^{2}} \le q+\varepsilon \quad \text{for}~~\xi\ge\zeta_{\varepsilon}, 
  \end{equation*}
  that is, 
  \begin{equation*}
    (q+\varepsilon) \frac{F'(\xi)}{F(\xi)} \le \frac{F''(\xi)}{F'(\xi)} \quad \text{for}~~\xi\ge\zeta_{\varepsilon}. 
  \end{equation*}
  Integrating this over $[\zeta_{\varepsilon}, \xi]$ for $\xi > \zeta_{\varepsilon}$, we have
  \begin{equation*}
    \frac{F'(\zeta_{\varepsilon})}{F'(\xi)} \le \left(\frac{F(\zeta_{\varepsilon})}{F(\xi)} \right)^{q+\varepsilon}, 
  \end{equation*}
  that is, 
  \begin{equation*}
    -\frac{F'(\zeta_{\varepsilon})}{F(\zeta_{\varepsilon})^{q+\varepsilon}} F(\xi)^{q+\varepsilon} \le -F'(\xi). 
  \end{equation*}
  Thus, we have
  \begin{equation*}
    (q-1+\varepsilon) C \le \frac{d}{d\xi} \left(F(\xi)^{-(q-1+\varepsilon)} \right) \quad \text{for}~~\xi >\zeta_{\varepsilon}. 
  \end{equation*}
  Integrating this over $(\zeta_{\varepsilon}, \xi)$ for $\xi > \zeta_{\varepsilon}$, we have
  \begin{equation*}
    (q-1+\varepsilon) C (\xi - \zeta_{\varepsilon}) \le F(\xi)^{-(q-1+\varepsilon)} - F(\zeta_{\varepsilon})^{-(q-1+\varepsilon)}, 
  \end{equation*}
  that is, 
  \begin{equation*}
    F(\xi)^{-(q-1+\varepsilon)} \ge (q-1+\varepsilon) C \xi + C\quad \text{for}~~\xi >\zeta_{\varepsilon}. 
  \end{equation*}
  Hence, 
  \begin{equation*}
    F(\xi)^{-(q-1+\varepsilon)} \ge C \xi \quad \text{for}~~\xi>\zeta_{\varepsilon}. 
  \end{equation*}
  Therefore, we have
  \begin{equation*}
    F(\xi) \le C \xi^{-\frac{1}{q-1+\varepsilon}} \quad \text{for}~~\xi>\zeta_{\varepsilon}. 
  \end{equation*}
  Furthermore, since $F^{-1}$ is decreasing, we have
  \begin{equation*}
    F^{-1}\left(C \xi^{-\frac{1}{q-1+\varepsilon}} \right) \le \xi, 
  \end{equation*}
  and hence there exist $\sigma_{\varepsilon}>0$ such that
  \begin{equation*}
    F^{-1}(\sigma) \le C \sigma^{-(q-1+\varepsilon)}
  \end{equation*}
  for $\sigma \in (0, \sigma_{\varepsilon})$. Therefore, this completes the proof of Lemma~\ref{estofF}. 
\end{proof}


\section{Existence results}

In this section, we prove Proposition~\ref{MainProp} which is an abstract existence results of $\eqref{1.1}$ and then, we prove Theorem~\ref{Exist-thm} as an application of this proposition. First, we recall a supersolution method to construct a solution to $\eqref{1.1}$. 

\begin{prop}\label{equisuper}
  Let $0<T\le\infty$ and $f$ be a continuous and nondecreasing function such that $f(0) \ge 0$. Then, $\eqref{1.1}$ has a local-in-time nonnegative solution in $\RN\times(0, T)$ if and only if $\eqref{1.1}$ has a nonnegative supersolution $\overline{u} \in L^{\infty}((0, T), L^{1}_{\rm ul}(\RN)) \cap L^{\infty}_{\rm loc}((0, T), L^{\infty}(\RN))$. Moreover, if a nonnegative supersolution $\overline{u}$ exists, then the solution $u$ satisfies $0\le u(t) \le \overline{u}(t)$ for $0<t<T$. 
\end{prop}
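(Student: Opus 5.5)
The proof splits into the two directions; the ``only if'' direction is immediate.

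\emph{Only if.} A solution $u$ in the sense of Definition~\ref{Defsol} satisfies the integral equation with equality. It therefore satisfies it as an inequality, and by definition it already lies in $L^{\infty}((0,T),L^{1}_{\rm ul}(\RN))\cap L^{\infty}_{\rm loc}((0,T),L^{\infty}(\RN))$. So $u$ itself is the required nonnegative supersolution.

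\emph{If.} Given $\overline{u}$, I construct the solution by monotone iteration. Set $u_1:=e^{t\Delta}u_0$ and
\begin{equation*}
  u_{n+1}(t):=e^{t\Delta}u_0+\int_0^t e^{(t-s)\Delta}\bigl[|\cdot|^{-\gamma}f(u_n(s))\bigr]\,ds,\qquad n\ge1.
\end{equation*}
The argument has three steps.
\begin{itemize}
  \item[1.] Since $f\ge0$ and the heat kernel is positive, $0\le u_1\le u_2$. The supersolution inequality gives $u_1\le\overline{u}$.
  \item[2.] By induction, using that $f$ is nondecreasing and that $e^{(t-s)\Delta}[|\cdot|^{-\gamma}\,\cdot\,]$ preserves order, I obtain
  \begin{equation*}
    0\le u_n\le u_{n+1}\le\overline{u}\quad\text{a.e. in }\RN\times(0,T).
  \end{equation*}
  Indeed, if $u_{n-1}\le u_n\le\overline{u}$, then $f(u_{n-1})\le f(u_n)\le f(\overline{u})$. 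Integrating against the kernel gives $u_n\le u_{n+1}$, and together with the supersolution inequality it gives $u_{n+1}\le\overline{u}$.
  \item[3.] Hence the limit $u:=\lim_{n\to\infty}u_n$ exists pointwise a.e. It satisfies $0\le u\le\overline{u}$, and so it inherits the bounds $L^{\infty}((0,T),L^{1}_{\rm ul})\cap L^{\infty}_{\rm loc}((0,T),L^{\infty})$ from $\overline{u}$. Measurability is preserved by the monotone limit.
\end{itemize}

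Passing to the limit in the recursion uses continuity of $f$, which gives $f(u_n)\to f(u)$ monotonically. The monotone convergence theorem, applied to the nonnegative integrands $G(x-y,t-s)|y|^{-\gamma}f(u_n(y,s))$ on $\RN\times(0,t)$, then gives the integral identity for a.a.\ $x$ and $t$. The comparison $u\le\overline{u}$ is exactly the ``moreover'' part.

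The main point needing care is that all the integrals are finite, so that the iterates are genuine measurable functions rather than $+\infty$. This follows from step 2: each integral is dominated by the corresponding integral for $\overline{u}$, and that integral is bounded above by $\overline{u}(x,t)$, which is finite a.e. One more detail: since $f$ need not vanish at $0$, the iteration must start from $u_1=e^{t\Delta}u_0$ (equivalently, from $u_0\equiv0$ as the zeroth iterate), which is what makes the sequence increasing. No smallness of $T$ or contraction argument is required, so the argument works for any $0<T\le\infty$.
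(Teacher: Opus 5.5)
Your proposal is correct and is the standard monotone-iteration argument (start below any solution, use that $f$ is nondecreasing and the kernel preserves order, then pass to the limit by monotone convergence), which is exactly the proof the paper defers to in \cite[Theorem~2.1]{RS13} and \cite[Proposition~2.4]{MS21}. One small point: when $f(0)>0$, starting from the zeroth iterate $0$ gives a different sequence than starting from $e^{t\Delta}u_0$, so ``equivalently'' is not literally true; however, the two sequences interlace and have the same limit, so the argument is unaffected.
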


\begin{proof}
A proof of Proposition \ref{equisuper} is well-known. See \cite[Proposition~2.4]{MS21} and \cite[Theorem~2.1]{RS13} for instance. 
\end{proof} 

We show the following proposition which is an abstract existence result of $\eqref{1.1}$ for a convex function. To this end, we prepare a function $H$. We now assume that a function $H \in C^{2}([0, \infty))$ satisfies the following: 
\begin{equation}\label{(H)}\tag{H}
  \begin{dcases}
    \lim_{\xi\to\infty} H(\xi) = \infty, &~ \\
    H(\xi)>0 &\text{for}~~\xi>0, \\
    H'(\xi)>0 &\text{for}~~\xi>0, \\
    H'(\xi)~\text{is nondecreasing} &\text{for large}~~\xi>0. 
  \end{dcases}
\end{equation}

\begin{prop}\label{MainProp}
    Let $N \geq 1$, $0 < \gamma < \min\{2, N\}$ and $u_0 \geq 0$. Let $H\in C^{2}([0, \infty))$ satisfy $\eqref{(H)}$. Suppose that $f \in C([0, \infty))$ and $f$ is a nonnegative and nondecreasing function in $(0,\infty)$. Suppose that there exist $\theta \in (0, 1]$ and $\zeta \geq 0$ such that one of the following holds{\rm :} 
    \begin{itemize}
      \item[$\mathrm{(i)}$] $H(u_0) \in L^{1}_{\mathrm{ul}}(\mathbb{R}^N)$ and 
      \begin{equation}\label{3.1}
        \lim_{\eta \to \infty} \Phi(\eta) \int_{\eta}^{\infty} \frac{\Psi(\tau) H'(\tau)}{H(\tau)^{1 + \frac{2-\gamma}{N}}} \left(1 - \left(\frac{H(\eta)}{H(\tau)}\right)^{\frac{2}{N}} \right)^{-\frac{\gamma}{2}} \,d\tau = 0, 
      \end{equation}

      \item[$\mathrm{(ii)}$] $H(u_0) \in \mathcal{L}^{1}_{\mathrm{ul}}(\mathbb{R}^N)$ and 
      \begin{equation}\label{3.2}
        \limsup_{\eta \to \infty} \Phi(\eta) \int_{\eta}^{\infty} \frac{\Psi(\tau) H'(\tau)}{H(\tau)^{1 + \frac{2-\gamma}{N}}} \left(1 - \left(\frac{H(\eta)}{H(\tau)}\right)^{\frac{2}{N}} \right)^{-\frac{\gamma}{2}} \,d\tau < \infty, 
      \end{equation}
    \end{itemize}
    where 
    \begin{equation*}
      \Phi(\xi) := \sup_{\zeta \leq \sigma \leq \xi} \frac{H'(\sigma)}{H(\sigma)^{1-\theta}} \quad \text{and} \quad \Psi(\xi) := \sup_{\zeta \leq \sigma \leq \xi} \frac{f(\sigma)}{H(\sigma)^{\theta}}. 
    \end{equation*}
    Then, there exists $T>0$ such that $\eqref{1.1}$ has a local-in-time nonnegative solution $u$ in $\RN \times (0,T)$. Moreover, there exists $C > 0$ depending on $N$ and $u_0$ such that 
    \begin{equation}\label{MainProp-est}
      \|H(u(t))\|_{L^{1}_{\mathrm{ul}}(\mathbb{R}^N)} \leq C \quad \text{for}~~0<t<T. 
    \end{equation}
\end{prop}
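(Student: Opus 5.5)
We follow Miyamoto--Suzuki \cite[Theorem~A]{MS21} and use the supersolution method, Proposition~\ref{equisuper}. Replace $u_0$ by $\max\{u_0,\zeta_0\}$ for a large constant $\zeta_0\ge\zeta$, chosen so that $H'$ is nondecreasing and $H$ is convex on $[\zeta_0,\infty)$. This is harmless: by Lemma~\ref{Lulmax}, $H(\max\{u_0,\zeta_0\})$ stays in $L^1_{\rm ul}$ (resp.\ $\cL^1_{\rm ul}$), and by comparison a supersolution for the larger datum is also one for $u_0$. The candidate supersolution is
\begin{equation*}
  \overline{u}(t):=H^{-1}\bigl(2\,e^{t\Delta}H(u_0)\bigr).
\end{equation*}
Since $H$ is convex, Proposition~\ref{Jensen}~(i) gives $H(e^{t\Delta}u_0)\le e^{t\Delta}H(u_0)$, so $e^{t\Delta}u_0\le H^{-1}(e^{t\Delta}H(u_0))$. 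It therefore suffices to show that
\begin{equation*}
  \int_0^t e^{(t-s)\Delta}\bigl[|\cdot|^{-\gamma}f(\overline{u}(s))\bigr]\,ds\le H^{-1}\bigl(2e^{t\Delta}H(u_0)\bigr)-H^{-1}\bigl(e^{t\Delta}H(u_0)\bigr)
\end{equation*}
for small $t$. By the mean value theorem and the monotonicity of $H'$, the right-hand side is at least $e^{t\Delta}H(u_0)/H'(H^{-1}(2e^{t\Delta}H(u_0)))$. The estimate \eqref{MainProp-est} then follows from $H(u)\le H(\overline u)=2e^{t\Delta}H(u_0)$ together with Proposition~\ref{LpLqul}~(i).

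For the nonlinear term, set $v(s):=e^{s\Delta}H(u_0)$ and write $f(\overline u)\le \Psi(\overline u)\,H(\overline u)^{\theta}=2^\theta\Psi(\overline u)v^\theta$. By Proposition~\ref{LpLqul}, $\|v(s)\|_\infty\le C s^{-N/2}$ (case (i)), and we bound $\Psi(\overline u(s))\le \Psi(H^{-1}(Cs^{-N/2}))$ using the monotonicity of $\Psi$. Concavity of $\xi\mapsto\xi^\theta$ is not directly usable here, so instead we write $v^\theta=v\cdot v^{\theta-1}$ and keep $v$ linear; alternatively we apply Proposition~\ref{Jensen}~(ii). Then Lemma~\ref{keylem1} with $\alpha=\gamma$ gives
\begin{equation*}
  e^{(t-s)\Delta}\bigl[|\cdot|^{-\gamma}v(s)\bigr]\le C\bigl((t-s)^{-1}+s^{-1}\bigr)^{\gamma/2}v(t).
\end{equation*}
The resulting quantity to estimate has the form
\begin{equation*}
  v(t)\int_0^t \Psi\bigl(H^{-1}(Cs^{-N/2})\bigr)\,\|v(s)\|_\infty^{\theta-1}\ldots\Bigl(\tfrac{t}{s(t-s)}\Bigr)^{\gamma/2}ds .
\end{equation*}
We must handle the factor $v^{\theta-1}$ carefully. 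The natural way is to use that $H'(\overline u)/H(\overline u)^{1-\theta}\le\Phi(\overline u)$, so that on the right-hand side we compare with $v(t)^{\theta}/\Phi(\cdot)$ rather than with $v(t)$.

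Substituting $\tau=H^{-1}(C s^{-N/2})$, i.e. $s\sim H(\tau)^{-2/N}$, turns the $s$-integral into an integral in $\tau$ from $\eta:=H^{-1}(Ct^{-N/2})$ to $\infty$. In these variables $ds\sim H(\tau)^{-1-2/N}H'(\tau)\,d\tau$, the factor $s^{-\gamma/2}$ combines into the weight $H(\tau)^{-1-(2-\gamma)/N}$, and $(1-s/t)^{-\gamma/2}$ becomes $(1-(H(\eta)/H(\tau))^{2/N})^{-\gamma/2}$. Multiplying by $\Phi(\eta)$, which comes from the lower bound on the right-hand side, reproduces exactly the quantity in \eqref{3.1}. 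In case (i) that quantity tends to $0$ as $\eta\to\infty$, i.e. as $t\to0$, so the inequality closes for small $T$. In case (ii) the expression is only bounded, so we need smallness elsewhere: Proposition~\ref{LpLqul}~(ii) gives $\|v(s)\|_\infty\le\varepsilon s^{-N/2}$ for $s<t_\varepsilon$, which shrinks the relevant constant and makes the bound \eqref{3.2} sufficient.

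The main obstacle is this change of variables together with the $\theta$-splitting. Specifically, we must show that the pointwise value $v(t,x)$, which may be much smaller than $\|v(t)\|_\infty$, can be controlled, using only $\sup$-type quantities, against the lower bound $v(t)/H'(H^{-1}(2v(t)))$ of the right-hand side. To do this we first treat the set where $v(t,x)$ is large, using the monotonicity of $\Phi$. Where $v(t,x)\le\zeta_0$-level, $\overline u$ is bounded and the integral term is $O(t^{1-\gamma/2})$, which is trivially small.
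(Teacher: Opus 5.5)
Your overall scheme is the paper's: the same supersolution $H^{-1}(2e^{t\Delta}H(u_1))$ with $u_1\ge\max\{\zeta,R_1\}$, the same Jensen plus mean-value lower bound $H(\overline u(t))/(2H'(\overline u(t)))$, the same substitution $\tau=\eta(s)$ with $\eta(s)=H^{-1}(Cs^{-N/2})$, and the same $\varepsilon$-trick from Proposition~\ref{LpLqul}~(ii) in case (ii). However, the one genuinely new step is missing. That step is the pointwise estimate of Lemma~\ref{3lem1}, with the power $\theta$ on the right:
\begin{equation*}
  e^{(t-s)\Delta}\bigl[|\cdot|^{-\gamma}H(\overline u(s))^{\theta}\bigr]\le C\,t^{\frac{\gamma}{2}}(t-s)^{-\frac{\gamma}{2}}s^{-\frac{\gamma}{2}}H(\overline u(t))^{\theta},\qquad 0<s<t.
\end{equation*}
Your substitute fails for $\theta<1$. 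Writing $v^{\theta}=v\cdot v^{\theta-1}$ and replacing $v(s,y)^{\theta-1}$ by $\|v(s)\|_\infty^{\theta-1}$ goes the wrong way: since $\theta-1\le 0$, in fact $v(s,y)^{\theta-1}\ge\|v(s)\|_\infty^{\theta-1}$.

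The only valid bound along that route is $v^{\theta-1}\le H(\zeta_0)^{\theta-1}$. This leaves $v(t,x)$ to the first power. After dividing by $v(t,x)/H'(\overline u(t,x))$ you would then need \eqref{3.1} with $\Phi(\eta)$ replaced by $H'(\eta)$. That loses a factor $H(\eta)^{1-\theta}$ relative to \eqref{3.1} and does not follow from it in general. This matters, because Theorem~\ref{Exist-thm} uses $\theta<q/r<1$ whenever $r>q$.

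Proposition~\ref{Jensen}~(ii) becomes usable only after absorbing the weight, $|y|^{-\gamma}v^{\theta}=(|y|^{-\gamma/\theta}v)^{\theta}$. Lemma~\ref{keylem1} with $\alpha=\gamma/\theta$ then needs $\gamma/\theta<N$, so this covers only $\theta>\gamma/N$. For $\theta<(N-\gamma)/N$ the paper instead uses H\"older's inequality against $G(x-y,t-s)\,dy$ with exponents $1/(1-\theta)$ and $1/\theta$, together with $e^{(t-s)\Delta}|\cdot|^{-\gamma/(1-\theta)}\le C(t-s)^{-\gamma/(2(1-\theta))}$ and $e^{(t-s)\Delta}v(s)=v(t)$. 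The leftover range $[(N-\gamma)/N,\gamma/N]$ is handled by a further H\"older interpolation between these two cases.

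Once this estimate is available, the ``main obstacle'' you describe does not arise, and no splitting of $\RN$ is needed. The Duhamel term at $(x,t)$ is at most $C\,t^{\gamma/2}H(\overline u(t,x))^{\theta}\int_0^t\Psi(\eta(s))(t-s)^{-\gamma/2}s^{-\gamma/2}\,ds$. Dividing by $H(\overline u(t,x))/(2H'(\overline u(t,x)))$ leaves the pointwise factor $H'(\overline u(t,x))/H(\overline u(t,x))^{1-\theta}$. This is at most $\Phi(\|\overline u(t)\|_\infty)\le\Phi(\eta(t))$, precisely because $\Phi$ is a supremum over $[\zeta,\xi]$ and $\overline u\ge\zeta$.

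Your proposed treatment of the small set is also false. The Duhamel term at $(x,t)$ involves $f(\overline u(s,y))$ for all $s\in(0,t)$ and all $y\in\RN$, and $\overline u(s)$ is unbounded as $s\to 0$. So boundedness of $v(t,x)$ gives no $O(t^{1-\gamma/2})$ bound.

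Finally, to invoke Proposition~\ref{equisuper} you still need $\overline u\in L^{\infty}((0,T),L^{1}_{\rm ul}(\RN))\cap L^{\infty}_{\rm loc}((0,T),L^{\infty}(\RN))$. This follows from concavity of $H^{-1}$ on $[H(R_1),\infty)$ together with Proposition~\ref{LpLqul}~(i).
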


A proof of Proposition~\ref{MainProp} is based on the construction of a suitable supersolution. Specifically, we take $R_1>0$ such that $H$ is convex on $[R_1,\infty)$ and define a function $u_1$ as
\begin{equation}\label{u_1def}
  u_1(x):=\max\{u_0(x), 1, R_1, \zeta\}, \quad x\in\RN. 
\end{equation}
Then, we prove that the function $\overline{u}$ defined by
\begin{equation}\label{ssdef}
  \overline{u}(t):=H^{-1}(2e^{t\Delta}H(u_1)), \quad t>0. 
\end{equation}
 is a supersolution of a problem $\eqref{1.1}$. We first show the estimate of $\overline{u}$. 

\begin{lem}\label{3lem1}
  Assume the same conditions as in Proposition~$\ref{MainProp}$. Let $u_1$ and $\overline{u}$ be as in $\eqref{u_1def}$ and $\eqref{ssdef}$, respectively. Then, $\overline{u}$ belongs to $L^{\infty}((0, \infty), L^{1}_{\rm ul}(\RN))\cap L^{\infty}_{\rm loc}((0, \infty), L^{\infty}(\RN))$. Furthermore, there exists $C>0$ depending only on $N$, $\gamma$ and $\theta$ such that
  \begin{equation}\label{ssest1}
    e^{(t-s)\Delta}\left[|\cdot|^{-\gamma} H(\overline{u}(s))^{\theta}\right] \le C t^{\frac{\gamma}{2}} H(\overline{u}(t))^{\theta} (t-s)^{-\frac{\gamma}{2}} s^{-\frac{\gamma}{2}} \quad \text{for}~~0<s<t. 
  \end{equation}
\end{lem}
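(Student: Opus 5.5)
Write $\phi:=H(u_1)$, so that $\overline{u}(t)=H^{-1}(2e^{t\Delta}\phi)$. I would prove the two claims of Lemma~\ref{3lem1} separately, starting with the membership of $\overline u$ in the stated spaces.

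\textbf{Membership.} Since $u_1\ge \max\{1,R_1\}$ and $H$ is convex and increasing on $[R_1,\infty)$, we have $H(u_1)\le H(\max\{u_0,1,R_1,\zeta\})$. Pointwise, $H(u_1)$ is bounded by $H(u_0)+H(\max\{1,R_1,\zeta\})$, and $H(u_0)\in L^1_{\rm ul}(\RN)$. Hence $\phi\in L^1_{\rm ul}(\RN)$, and by convexity of $H$ for large arguments also $u_1\in L^1_{\rm ul}(\RN)$.

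\begin{itemize}
\item \emph{Boundedness in time.} Proposition~\ref{LpLqul}~(i) with $p=q=1$ gives $\|e^{t\Delta}\phi\|_{L^1_{\rm ul}}\le C\|\phi\|_{L^1_{\rm ul}}$ uniformly in $t$. Since $H^{-1}$ is concave on $[H(R_1),\infty)$, applying Proposition~\ref{Jensen}~(ii) on unit balls shows $\overline u(t)\in L^1_{\rm ul}(\RN)$ uniformly in $t$. Alternatively, one can use $H^{-1}(s)\le C(1+s)$, which holds because $H'$ is nondecreasing and positive for large $\xi$, so $H$ grows at least linearly.
\item \emph{Local boundedness.} Proposition~\ref{LpLqul}~(i) with $q=\infty$ gives $\|e^{t\Delta}\phi\|_\infty\le Ct^{-N/2}\|\phi\|_{L^1_{\rm ul}}$ for $0<t\le 1$, and a bound independent of $t$ for $t\ge1$. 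Therefore $\overline u\in L^\infty_{\rm loc}((0,\infty),L^\infty(\RN))$.
\end{itemize}

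\textbf{Estimate \eqref{ssest1}.} Note that $H(\overline u(s))^\theta=(2e^{s\Delta}\phi)^\theta$. Since $\theta\in(0,1]$, the map $x\mapsto x^\theta$ is concave, but Jensen's inequality runs the wrong way for moving the power outside. I would instead apply H\"older's inequality to the heat-kernel integral with exponents $1/\theta$ and $1/(1-\theta)$, splitting the weight as $|y|^{-\gamma}=|y|^{-\gamma\theta}\cdot|y|^{-\gamma(1-\theta)}$:
\begin{equation*}
e^{(t-s)\Delta}\bigl[|\cdot|^{-\gamma}(e^{s\Delta}\phi)^\theta\bigr]\le \Bigl(e^{(t-s)\Delta}\bigl[|\cdot|^{-\gamma}e^{s\Delta}\phi\bigr]\Bigr)^{\theta}\Bigl(e^{(t-s)\Delta}|\cdot|^{-\gamma}\Bigr)^{1-\theta}.
\end{equation*}

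For the first factor, Lemma~\ref{keylem1} with $\alpha=\gamma<N$ gives
\begin{equation*}
C\Bigl(\tfrac1{t-s}+\tfrac1s\Bigr)^{\gamma/2}e^{t\Delta}\phi=C\Bigl(\tfrac{t}{(t-s)s}\Bigr)^{\gamma/2}e^{t\Delta}\phi .
\end{equation*}

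For the second factor, a direct heat-kernel computation (or Lemma~\ref{keylem1} applied with $\phi\equiv1$ in the limit) gives $e^{(t-s)\Delta}|\cdot|^{-\gamma}\le C(t-s)^{-\gamma/2}$. I would rather bound it by $C\bigl(t/((t-s)s)\bigr)^{\gamma/2}$. This is legitimate because $t/s\ge1$, so $(t-s)^{-\gamma/2}\le \bigl(t/((t-s)s)\bigr)^{\gamma/2}$.

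Combining the two factors gives
\begin{equation*}
C\Bigl(\tfrac{t}{(t-s)s}\Bigr)^{\gamma/2}(e^{t\Delta}\phi)^\theta .
\end{equation*}
To finish, I need $(e^{t\Delta}\phi)^\theta\le C H(\overline u(t))^\theta$. This holds because $2e^{t\Delta}\phi\ge e^{t\Delta}\phi$, so $(e^{t\Delta}\phi)^\theta\le (2e^{t\Delta}\phi)^\theta=H(\overline u(t))^\theta$. Absorbing the factor $2^\theta$ from $H(\overline u(s))^\theta=2^\theta(e^{s\Delta}\phi)^\theta$ into the constant then yields \eqref{ssest1} with $C=C(N,\gamma,\theta)$.

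\textbf{Main obstacle.} The delicate step is the H\"older splitting, where the singular weight has to be distributed between the two factors. I also need the second factor to stay bounded, which requires the estimate $\int G(x-y,t-s)|y|^{-\gamma}\,dy\le C(t-s)^{-\gamma/2}$ uniformly in $x$. This uses $\gamma<N$, since the singularity is then locally integrable, and the maximum of the integral is attained at $x=0$ by rearrangement.
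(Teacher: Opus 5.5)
Your proof is correct. The membership part is essentially the paper's argument: the $L^\infty$ bound comes from Proposition~\ref{LpLqul}~(i) with $q=\infty$, and the $L^1_{\rm ul}$ bound from the linear upper bound for the concave $H^{-1}$ on $[H(R_1),\infty)$. The ball-averaged Jensen you mention is ordinary Jensen, not Proposition~\ref{Jensen}~(ii), which concerns the heat semigroup; your linear-bound alternative is the cleaner route.

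For \eqref{ssest1} you take a genuinely simpler route than the paper. The paper splits into three ranges of $\theta$:
\begin{itemize}
\item For $\theta>\gamma/N$, it writes $|\cdot|^{-\gamma}H^\theta=(|\cdot|^{-\gamma/\theta}H)^\theta$ and pulls the power out of the outer semigroup $e^{(t-s)\Delta}$ by Jensen. That direction of Jensen is the favourable one, but it loads the whole weight $|\cdot|^{-\gamma/\theta}$ onto one factor, so Lemma~\ref{keylem1} needs $\gamma/\theta<N$.
\item For $\theta<(N-\gamma)/N$, it applies H\"older with the whole weight $|\cdot|^{-\gamma/(1-\theta)}$ on the constant factor, which needs $\gamma/(1-\theta)<N$.
\item In the remaining range, it interpolates by H\"older between some $\theta_1<(N-\gamma)/N$ and $\theta_2>\gamma/N$.
\end{itemize}

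Your balanced splitting $|y|^{-\gamma}=|y|^{-\gamma\theta}|y|^{-\gamma(1-\theta)}$ is exactly the paper's interpolation step taken at the endpoints $\theta_1=0$, $\theta_2=1$. At those endpoints both estimates only require $\gamma<N$: Lemma~\ref{keylem1} with $\alpha=\gamma$, and $e^{\tau\Delta}|\cdot|^{-\gamma}\le C\tau^{-\gamma/2}$. The latter needs no rearrangement argument; after scaling, just split $e^{\Delta}|\cdot|^{-\gamma}$ over $|z|<1$ and $|z|\ge1$. So your argument covers all $\theta\in(0,1]$ in one step and makes the case distinction unnecessary.

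The paper's case split buys nothing for the final statement. In the small-$\theta$ range it gives the slightly sharper intermediate bound $C(t-s)^{-\gamma/2}H(\overline{u}(t))^\theta$, but that is immediately weakened to the form \eqref{ssest1}, which is exactly what your argument yields.
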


\begin{proof}[{\bf Proof of Lemma~\ref{3lem1}}]
  We first show $\overline{u}\in L^{\infty}((0, \infty), L^{1}_{\rm ul}(\RN))\cap L^{\infty}_{\rm loc}((0, \infty), L^{\infty}(\RN))$. Since $H(u_1) \in L^{1}_{\rm ul}(\RN)$, it follows from Propsition~\ref{LpLqul} (i) that 
  \begin{equation*}
    \|\overline{u}(t)\|_{\infty} \leq H^{-1}(2 C(1+t^{-\frac{N}{2}}) \|H(u_1)\|_{L^{1}_{\rm ul}(\RN)}) < \infty \quad \textrm{for}~~t>0. 
  \end{equation*}
  Hence, $\overline{u} \in L^{\infty}_{\rm loc}((0, \infty), L^{\infty}(\RN))$. Since $H$ is convex on $[R_1, \infty)$, $H^{-1}$ is concave on $[H(R_1), \infty)$ and we have
  \begin{equation*}
    H^{-1}(\xi) \le R_1 + (H^{-1})'(H(R_1)) (\xi - H(R_1)) \quad \text{for}~~\xi\ge H(R_1). 
  \end{equation*}
  Then, by Propsition~\ref{LpLqul} (i) and the fact that $2e^{t\Delta}H(u_1) \ge H(R_1)$ for $t>0$, we have
  \begin{align*}
    \|\overline{u}(t)\|_{L^{1}_{\rm ul}(\RN)} \le C \|e^{t\Delta} H(u_1) + C\|_{L^{1}_{\rm ul}(\RN)} & \le C \|e^{t\Delta} H(u_1)\|_{L^{1}_{\rm ul}(\RN)} + \| C \|_{L^{1}_{\rm ul}(\RN)}  \\
    &\le C \|H(u_1)\|_{L^{1}_{\rm ul}(\RN)} + C<\infty
  \end{align*}
  for $t>0$. Thus, $\overline{u} \in L^{\infty}((0, \infty), L^{1}_{\rm ul}(\RN))$. Therefore, we have $\overline{u}\in L^{\infty}((0, \infty), L^{1}_{\rm ul}(\RN))\cap L^{\infty}_{\rm loc}((0, \infty), L^{\infty}(\RN))$. 
  
  We next show $\eqref{ssest1}$. The proof is divided into three cases based on the range of $\theta$. 
  
  \medskip 
  \noindent
  \underline{{\bf Case~(i)}~$\theta\in(\gamma/N,1]$}\quad Since $\gamma/\theta<N$, we observe from Proposition~\ref{Jensen} (ii) and Lemma~\ref{keylem1} that 
  \begin{align*}
    e^{(t-s)\Delta}\left[|\cdot|^{-\gamma} H(\overline{u}(s))^{\theta} \right] &=e^{(t-s)\Delta}\left[|\cdot|^{-\frac{\gamma}{\theta}} H(\overline{u}(s)) \right]^{\theta} \\
    &\le \left[e^{(t-s)\Delta}|\cdot|^{-\frac{\gamma}{\theta}} H(\overline{u}(s))\right]^{\theta} \\
    &=2^{\theta} \left[e^{(t-s)\Delta}|\cdot|^{-\frac{\gamma}{\theta}} e^{s\Delta}H(u_1)\right]^{\theta} \\
    &\le C t^{\frac{\gamma}{2}} H(\overline{u}(t))^{\theta}(t-s)^{-\frac{\gamma}{2}}s^{-\frac{\gamma}{2}}
  \end{align*}
  for $0<s<t$. 
  
  \medskip 
  \noindent
  \underline{{\bf Case~(ii)}~$\theta\in(0,(N-\gamma)/N)$}\quad By H\"{o}lder's inequality, we have
  \begin{align*}
    &\left[e^{(t-s)\Delta} |\cdot|^{-\gamma} H(\overline{u}(s))^{\theta} \right](x)=\int_{\RN} G(x-y,t-s)|y|^{-\gamma} H(\overline{u}(y,s))^{\theta} \,dy \\
    &\qquad =\int_{\RN} \left(G(x-y,t-s)|y|^{-\frac{\gamma}{1-\theta}} \right)^{1-\theta} \left(G(x-y,t-s)H(\overline{u}(y,s)) \right)^{\theta} \,dy \\
    &\qquad \le \left(\int_{\RN} G(x-y,t-s)|y|^{-\frac{\gamma}{1-\theta}} \,dy \right)^{1-\theta} \left(\int_{\RN} G(x-y,t-s)H(\overline{u}(y,s)) \,dy \right)^{\theta} \\
    &\qquad =\left[e^{(t-s)\Delta} |\cdot|^{-\frac{\gamma}{1-\theta}} \right]^{1-\theta}(x) \left[e^{(t-s)\Delta} H(\overline{u}(s)) \right]^{\theta}(x)
  \end{align*}
  for $x\in\RN$ and $0<s<t$. Since $0<\gamma/(1-\theta)<N$, we see that
  \begin{align*}
    \left(e^{(t-s)\Delta} |\cdot|^{-\frac{\gamma}{1-\theta}} \right)^{1-\theta} &= \left((t-s)^{-\frac{\gamma}{2(1-\theta)}} [e^{\Delta} |\cdot|^{-\frac{\gamma}{1-\theta}}] \right)^{1-\theta} \\
    &\le C(t-s)^{-\frac{\gamma}{2}}
  \end{align*}
  for $0<s<t$. Moreover, by the definition of $\overline{u}$, we obtain
  \begin{align*}
    \left(e^{(t-s)\Delta} H(\overline{u}(s))\right)^{\theta} &=\left(2 e^{(t-s)\Delta}(e^{s\Delta}H(u_1)) \right)^{\theta} \\
    &=\left(2 e^{t\Delta}H(u_1) \right)^{\theta} \\
    &=H(\overline{u}(t))^{\theta}
  \end{align*}
  for $0<s<t$. Thus, we have
  \begin{align*}
    e^{(t-s)\Delta} \left[|\cdot|^{-\gamma} H(\overline{u}(s))^{\theta} \right] &\le C H(\overline{u}(t))^{\theta} (t-s)^{-\frac{\gamma}{2}} \\
    &\le C t^{\frac{\gamma}{2}} H(\overline{u}(t))^{\theta} (t-s)^{-\frac{\gamma}{2}} s^{-\frac{\gamma}{2}}
  \end{align*}
  for $0<s<t$. 
    
  \medskip 
  \noindent
  \underline{{\bf Case~(iii)}~$\theta\in[(N-\gamma)/N,\gamma/N]$}\quad We take $\theta_1$ and $\theta_2$ satisfying
  \begin{equation*}
    0<\theta_1<\frac{N-\gamma}{N} \quad \text{and} \quad \frac{\gamma}{N}<\theta_2\le1, 
  \end{equation*}
  respectively. Since $\theta\in[(N-\gamma)/N, \gamma/N]$, there exists $\alpha\in(0,1)$ such that
  \begin{equation*}
    \theta=(1-\alpha)\theta_1+\alpha\theta_2. 
  \end{equation*}
  Thus, it follows from H\"{o}lder's inequality that
  \begin{align*}
    &\left[e^{(t-s)\Delta} |\cdot|^{-\gamma} H(\overline{u}(s))^{\theta} \right](x)=\int_{\RN} G(x-y,t-s)|y|^{-\gamma} H(\overline{u}(y,s))^{(1-\alpha)\theta_1} H(\overline{u}(y,s))^{\alpha\theta_2} \,dy \\
    &\quad= \int_{\RN} \left[G(x-y,t-s)|y|^{-\gamma} H(\overline{u}(y,s))^{\theta_1} \right]^{(1-\alpha)} \left[G(x-y,t-s)|y|^{-\gamma} H(\overline{u}(y,s))^{\theta_2} \right]^{\alpha} \,dy \\
    &\quad \le \left(\int_{\RN} G(x-y,t-s)|y|^{-\gamma} H(\overline{u}(y,s))^{\theta_1}\,dy \right)^{(1-\alpha)} \left(\int_{\RN} G(x-y,t-s)|y|^{-\gamma} H(\overline{u}(y,s))^{\theta_2}\,dy \right)^{\alpha}
  \end{align*}
  for $x\in\RN$ and $0<s<t$. Since $\theta_1\in(0,(N-\gamma)/N)$ and $\theta_2\in(\gamma/N,1]$, by the argument in the cases (i) and (ii), we have
  \begin{align*}
    &e^{(t-s)\Delta} \left[|\cdot|^{-\gamma} H(\overline{u}(s))^{\theta} \right] \\
    &\le C\left(t^{\frac{\gamma}{2}} H(\overline{u}(t))^{\theta_1} (t-s)^{-\frac{\gamma}{2}} s^{-\frac{\gamma}{2}} \right)^{(1-\alpha)} \left(t^{\frac{\gamma}{2}} H(\overline{u}(t))^{\theta_2} (t-s)^{-\frac{\gamma}{2}} s^{-\frac{\gamma}{2}} \right)^{\alpha} \\
    &=C t^{\frac{\gamma}{2}} H(\overline{u}(t))^{\theta} (t-s)^{-\frac{\gamma}{2}} s^{-\frac{\gamma}{2}}
  \end{align*}
  for $0<s<t$. Therefore, we have $\eqref{ssest1}$ and hence, this completes the proof of Lemma~\ref{3lem1}. 
\end{proof}

We move to prove Proposition~\ref{MainProp}. 

\begin{proof}[{\bf Proof of Proposition~\ref{MainProp}}]
  Let $u_1$ and $\overline{u}$ be as in $\eqref{u_1def}$ and $\eqref{ssdef}$, respectively. We see that $H(u_1)\in L^{1}_{\rm ul}(\RN)$ in the case (i). By Lemma~\ref{Lulmax}, we have that $H(u_1) \in \cL^{1}_{\rm ul}(\RN)$ in the case (ii). We note that since $u_1\ge R_1$, we can use the convexity of $H$ on $[R_1,\infty)$. Then, we have
  \begin{equation*}
    H(u_1) \ge H(R_1) + H'(R_1)(u_1-R_1). 
  \end{equation*}
  Thus, we obtain $u_1 \in L^{1}_{\rm ul}(\RN)$, and hence $u_0 \in L^{1}_{\rm ul}(\RN)$. 
  
  We now show that $\overline{u}$ is a supersolution of $\eqref{1.1}$. By Lemma~\ref{3lem1}, we have $\overline{u}\in L^{\infty}((0, \infty), L^{1}_{\rm ul}(\RN))\cap L^{\infty}_{\rm loc}((0, \infty), L^{\infty}(\RN))$. Thus, we show that there exists $T>0$ such that
  \begin{equation}\label{ssineq}
    \overline{u}(t) \ge e^{t\Delta}u_0 + \int_{0}^{t} e^{(t-s)\Delta} |\cdot|^{-\gamma} f(\overline{u}(s)) \,ds
  \end{equation}
  for $t \in (0, T)$. Fix $t>0$. Since $[e^{t\Delta}u_1](\cdot)\ge R_1$, we observe from Proposition~\ref{Jensen} (i) and mean value theorem that
  \begin{equation}\label{3.8}
    \begin{split}
      \overline{u}(t) - e^{t\Delta} u_0 &\ge \overline{u}(t) - e^{t\Delta} u_1 \\
      &\ge H^{-1}(2 e^{t\Delta} H(u_1)) - H^{-1}(e^{t\Delta} H(u_1)) \\
      &=(H^{-1})'((1+c) e^{t\Delta} H(u_1)) ~e^{t\Delta} H(u_1) \\
      &=\frac{H(\overline{u}(t))}{2 H'(H^{-1}((1+c)e^{t\Delta}H(u_1)))}
    \end{split}
  \end{equation}
  for some $c=c(x,t)\in[0,1]$. Since $H^{-1}$ is nondecreasing, we have
  \begin{equation}\label{3.9}
    \begin{split}
      H^{-1}((1+c) e^{t\Delta} H(u_1)) \le H^{-1}(2e^{t\Delta}H(u_1))=\overline{u}(t) 
    \end{split}
  \end{equation}
  Since $H'$ is nondecreasing, we observe from $\eqref{3.8}$ and $\eqref{3.9}$ that
  \begin{equation}\label{3.10}
    \overline{u}(t) - e^{t\Delta} u_0 \ge \frac{1}{2} \frac{H(\overline{u}(t))}{H'(\overline{u}(t))}. 
  \end{equation}
  On the other hand, it follows from Lemma~\ref{3lem1} that
  \begin{equation}\label{3.11}
    \begin{split}
      \int_0^t e^{(t-s)\Delta}|\cdot|^{-\gamma} f(\overline{u}(s))\,ds &\le \int_0^t \left\|\frac{f(\overline{u}(s))}{H(\overline{u}(s))^{\theta}} \right\|_{\infty} \left[e^{(t-s)\Delta}|\cdot|^{-\gamma} H(\overline{u}(s))^{\theta} \right]\,ds \\
      &\le C t^{\frac{\gamma}{2}} H(\overline{u}(t))^{\theta} \int_0^t \left\|\frac{f(\overline{u}(s))}{H(\overline{u}(s))^{\theta}} \right\|_{\infty} (t-s)^{-\frac{\gamma}{2}} s^{-\frac{\gamma}{2}}\,ds. 
    \end{split}
  \end{equation}
  Therefore, by $\eqref{3.10}$ and $\eqref{3.11}$, it suffices to show the following to prove $\eqref{ssineq}$: 
  \begin{equation}\label{3.12}
    \limsup_{t\to0} t^{\frac{\gamma}{2}} \frac{H'(\overline{u}(t))}{H(\overline{u}(t))^{1-\theta}} \int_{0}^{t} \left\|\frac{f(\overline{u}(s))}{H(\overline{u}(s))^{\theta}} \right\|_{\infty} (t-s)^{-\frac{\gamma}{2}} s^{-\frac{\gamma}{2}} \,ds =0. 
  \end{equation}
  We first consider the case (i). By the definitions of $\Phi$ and $\Psi$, we have 
  \begin{equation}\label{3.13}
    \begin{split}
      t^{\frac{\gamma}{2}} \frac{H'(\overline{u}(t))}{H(\overline{u}(t))^{1-\theta}} &\int_{0}^{t} \left\|\frac{f(\overline{u}(s))}{H(\overline{u}(s))^{\theta}} \right\|_{\infty} (t-s)^{-\frac{\gamma}{2}} s^{-\frac{\gamma}{2}} \,ds \\
      &\le t^{\frac{\gamma}{2}} \Phi(\|\overline{u}(t)\|_{\infty}) \int_{0}^{t} \Psi(\|\overline{u}(s)\|_{\infty}) (t-s)^{-\frac{\gamma}{2}} s^{-\frac{\gamma}{2}} \,ds. 
    \end{split}
  \end{equation}
  Now, it follows from Proposition~\ref{LpLqul}~(i) that
  \begin{equation*}
    \|\overline{u}(t)\|_{\infty} \le H^{-1}\left(2 C t^{-\frac{N}{2}} \|H(u_1)\|_{L^{1}_{\rm ul}(\RN)}\right) =:\eta(t)
  \end{equation*}
  for $0<t<1$. Then, we have 
  \begin{equation}\label{3.14}
    \begin{split}
      t^{\frac{\gamma}{2}} \Phi&(\|\overline{u}(t)\|_{\infty}) \int_{0}^{t} \Psi(\|\overline{u}(s)\|_{\infty}) (t-s)^{-\frac{\gamma}{2}} s^{-\frac{\gamma}{2}} \,ds \\
      &\le t^{\frac{\gamma}{2}} \Phi(\eta(t)) \int_{0}^{t} \Psi\left(\eta(s) \right) (t-s)^{-\frac{\gamma}{2}} s^{-\frac{\gamma}{2}} \,ds \\
      &=C \|H(u_1)\|_{L^{1}_{\rm ul}(\RN)}^{\frac{2-\gamma}{N}} \Phi(\eta(t)) \int_{\eta(t)}^{\infty} \frac{\Psi(\tau) H'(\tau)}{H(\tau)^{1+\frac{2-\gamma}{N}}} \left(1-\left(\frac{H(\eta(t))}{H(\tau)} \right)^{\frac{2}{N}} \right)^{-\frac{\gamma}{2}} \,d\tau
    \end{split}
  \end{equation}
  for $0<t<1$. Here we used a change of variables $\tau=\eta(s)$ in the last equality. 
  By $\eqref{3.13}$ and $\eqref{3.14}$, we have
  \begin{align*}
    t^{\frac{\gamma}{2}} \frac{H'(\overline{u}(t))}{H(\overline{u}(t))^{1-\theta}} &\int_{0}^{t} \left\|\frac{f(\overline{u}(s))}{H(\overline{u}(s))^{\theta}} \right\|_{\infty} (t-s)^{-\frac{\gamma}{2}} s^{-\frac{\gamma}{2}} \,ds \\
    &\le C \|H(u_1)\|_{L^{1}_{\rm ul}(\RN)}^{\frac{2-\gamma}{N}} \Phi(\eta(t)) \int_{\eta(t)}^{\infty} \frac{\Psi(\tau) H'(\tau)}{H(\tau)^{1+\frac{2-\gamma}{N}}} \left(1-\left(\frac{H(\eta(t))}{H(\tau)} \right)^{\frac{2}{N}} \right)^{-\frac{\gamma}{2}} \,d\tau
  \end{align*}
  for $0<t<1$. Thus, $\eqref{3.12}$ follows from assumption $\eqref{3.1}$. 

  We next consider the case (ii). Since $H(u_1)\in\cL^{1}_{\rm ul}(\RN)$, it follows from Proposition~\ref{LpLqul} (ii) that for any $\varepsilon>0$, there exists $t_{\varepsilon}>0$ such that 
  \begin{equation*}
    \|\overline{u}(t)\|_{\infty} \leq H^{-1}(2 \varepsilon t^{-\frac{N}{2}} \|H(u_1)\|_{L^{1}_{\rm ul}(\RN)}) =:\eta_{\varepsilon}(t) \quad \text{for}~~0<t<t_{\varepsilon}. 
  \end{equation*}
  By the same argument as in the case (i), we have
  \begin{align*}
    t^{\frac{\gamma}{2}} \frac{H'(\overline{u}(t))}{H(\overline{u}(t))^{1-\theta}} &\int_{0}^{t} \left\|\frac{f(\overline{u}(s))}{H(\overline{u}(s))^{\theta}} \right\|_{\infty} (t-s)^{-\frac{\gamma}{2}} s^{-\frac{\gamma}{2}} \,ds \\
    &\le C \varepsilon^{\frac{2-\gamma}{N}} \|H(u_1)\|_{L^{1}_{\rm ul}(\RN)}^{\frac{2-\gamma}{N}} \Phi(\eta_{\varepsilon}(t)) \int_{\eta_{\varepsilon}(t)}^{\infty} \frac{\Psi(\tau) H'(\tau)}{H(\tau)^{1+\frac{2-\gamma}{N}}} \left(1-\left(\frac{H(\eta_{\varepsilon}(t))}{H(\tau)} \right)^{\frac{2}{N}} \right)^{-\frac{\gamma}{2}} \,d\tau
  \end{align*}
  for $0<t<t_{\varepsilon}$. By $\eqref{3.2}$, we have
  \begin{equation*}
    \limsup_{t\to0} t^{\frac{\gamma}{2}} \frac{H'(\overline{u}(t))}{H(\overline{u}(t))^{1-\theta}} \int_{0}^{t} \left\|\frac{f(\overline{u}(s))}{H(\overline{u}(s))^{\theta}} \right\|_{\infty} (t-s)^{-\frac{\gamma}{2}} s^{-\frac{\gamma}{2}} \,ds \le C \varepsilon^{\frac{2-\gamma}{N}} \|H(u_1)\|_{L^{1}_{\rm ul}(\RN)}^{\frac{2-\gamma}{N}}. 
  \end{equation*}
  Since $\varepsilon>0$ is arbitrary, we obtain $\eqref{3.12}$. 
  
  By $\eqref{ssineq}$, there exists $T>0$ such that $\overline{u}$ is a supersolution of $\eqref{1.1}$ in $\RN \times (0, T)$ satisfying $\overline{u}\in L^{\infty}((0, \infty), L^{1}_{\rm ul}(\RN))\cap L^{\infty}_{\rm loc}((0, \infty), L^{\infty}(\RN))$. By Proposition~\ref{equisuper}, we see that $\eqref{1.1}$ has a local-in-time nonnegative solution $u$ in $\RN \times (0, T)$ and that $0 \le u(t) \le \overline{u}(t)$ for $0 < t < T$. Since $H(u_1) \in L^{1}_{\rm ul}(\RN)$, we have 
  \begin{equation*}
    \|H(u(t))\|_{L^{1}_{\rm ul}(\RN)} \le \|H(\overline{u}(t))\|_{L^{1}_{\rm ul}(\RN)} = C \|e^{t\Delta} H(u_1)\|_{L^{1}_{\rm ul}(\RN)} \le C \|H(u_1)\|_{L^{1}_{\rm ul}(\RN)} <\infty 
  \end{equation*}
  for $0<t<T$. Therefore, we obtain $\eqref{MainProp-est}$. This completes the proof of Proposition~\ref{MainProp}. 
\end{proof}

We next relate $H$ and nonlinear term $f$ with $H(\xi):=F(\xi)^{-r}$ and show Theorem~\ref{Exist-thm} by using Proposition~\ref{MainProp}. 

\begin{proof}[\bf Proof of Theorem~\ref{Exist-thm}]
  In all cases, since $f\in S_{q}$, we see that $f \in C([0, \infty))$ and $f$ is nonnegative and nondecreasing in $(0, \infty)$. In the following, we will show that it is possible to apply Proposition~\ref{MainProp} with $H(\xi) := F(\xi)^{-r}$. 
  
  We first show that $H$ satisfies $\eqref{(H)}$. Since $f \in S_{q}$, we have $H \in C^{2}([0, \infty))$, $\lim_{\xi\to\infty} H(\xi) = \infty$, $H(\xi)>0$ and
  \begin{equation*}
    H'(\xi)=\frac{r}{f(\xi) F(\xi)^{r+1}}>0, \qquad H''(\xi) = \frac{r((r+1) - f'(\xi) F(\xi))}{f(\xi)^{2} F(\xi)^{r+2}}
  \end{equation*}
  for $\xi>0$. It remains to show that
  \begin{equation}\label{3.15}
    H''(\xi) \ge 0 \quad \text{for large}~~\xi>0. 
  \end{equation}
  In the cases (i) and (iii), since $q<1+r$, we have $(r+1) - f'(\xi) F(\xi)\to r+1-q>0$ as $\xi \to \infty$, and hence $\eqref{3.15}$ holds. In the case (ii), by $\eqref{sub-assum}$, there exists $R_2>0$ such that $q=r+1\ge f'(\xi)F(\xi)$ for $\xi>R_2$ and hence $\eqref{3.15}$ holds. Thus, $H$ satisfies $\eqref{(H)}$. 

  We consider the case (i). First, we show that there exists $\theta\in(0,1]$ such that
  \begin{equation}\label{*1}
    \frac{d}{d\xi} \left(\frac{f(\xi)}{H(\xi)^{\theta}} \right) \ge 0 \quad \text{for large}~~\xi>0
  \end{equation}
  and 
  \begin{equation}\label{*2}
    \frac{d}{d\xi} \left(\frac{H'(\xi)}{H(\xi)^{1-\theta}} \right) \ge 0 \quad \text{for large}~~\xi>0. 
  \end{equation}
  By the direct calculation, we have
  \begin{equation*}
    \frac{d}{d\xi} \left(\frac{f(\xi)}{H(\xi)^{\theta}} \right) = (f'(\xi)F(\xi)-r\theta) F(\xi)^{r\theta - 1}
  \end{equation*}
  and
  \begin{equation*}
    \frac{d}{d\xi} \left(\frac{H'(\xi)}{H(\xi)^{1-\theta}} \right) = (1-\theta) \frac{H''(\xi)}{H(\xi)^{1-\theta}} \left(\frac{1}{1-\theta}-\frac{r}{(r+1)-f'(\xi)F(\xi)} \right). 
  \end{equation*}
  Moreover, since $f\in S_{q}$, we see that
  \begin{equation*}
    \lim_{\xi\to\infty} (f'(\xi)F(\xi)-r\theta)=q-r\theta \quad \text{and} \quad \lim_{\xi\to\infty} \frac{r}{(r+1)-f'(\xi)F(\xi)}=\frac{r}{r+1-q}. 
  \end{equation*}
  Thus, if we take $\theta$ which satisfies
  \begin{equation*}
    \theta\in (0,1)\quad \text{and}\quad \frac{q-1}{r}<\theta<\min\left\{1,\frac{q}{r}\right\}, 
  \end{equation*}
  then we obtain $\eqref{*1}$ and $\eqref{*2}$. 

  Next, we check that $H$ satisfies $\eqref{3.1}$. By $\eqref{*1}$ and $\eqref{*2}$, we can take $\zeta>0$ such that 
  \begin{equation*}
    \Phi(\xi)=\frac{H'(\xi)}{H(\xi)^{1-\theta}},~~\Psi(\xi)=\frac{f(\xi)}{H(\xi)^{\theta}} 
  \end{equation*}
  for $\xi>\zeta$ and $\Phi$, $\Psi$ are nondecreasing on $(\zeta, \infty)$. Then, we have
  \begin{equation}\label{3.18}
    \begin{split}
      \Phi(\eta) \int_{\eta}^{\infty} \frac{\Psi(\tau) H'(\tau)}{H(\tau)^{1 + \frac{2-\gamma}{N}}} &\left(1 - \left(\frac{H(\eta)}{H(\tau)}\right)^{\frac{2}{N}} \right)^{-\frac{\gamma}{2}} \,d\tau \\
      &\le \int_{\eta}^{\infty} \frac{f(\tau) (H'(\tau))^{2}}{H(\tau)^{2 + \frac{2-\gamma}{N}}} \left(1 - \left(\frac{H(\eta)}{H(\tau)}\right)^{\frac{2}{N}} \right)^{-\frac{\gamma}{2}} \,d\tau \\
      &=r^{2} \int_{\eta}^{\infty} \frac{F(\tau)^{\frac{(2-\gamma)r}{N} - 2}}{f(\tau)} \left(1- \left(\frac{F(\eta)}{F(\tau)} \right)^{-\frac{2r}{N}} \right)^{-\frac{\gamma}{2}} \,d\tau 
    \end{split}
  \end{equation}
  for $\eta>\zeta$. By a change of variables $(F(\eta)/F(\tau))^{-2r/N}=\sigma$, we have
  \begin{equation}\label{3.19}
    \begin{split}
      r^{2} \int_{\eta}^{\infty} \frac{F(\tau)^{\frac{(2-\gamma)r}{N} - 2}}{f(\tau)} &\left(1- \left(\frac{F(\eta)}{F(\tau)} \right)^{-\frac{2r}{N}} \right)^{-\frac{\gamma}{2}} \,d\tau \\
      &=\frac{rN}{2} F(\eta)^{\frac{(2-\gamma)r}{N} - 1} \int_{0}^{1} \sigma^{\frac{N}{2r} \left(\frac{(2-\gamma)r}{N} - 1 \right) - 1} (1-\sigma)^{-\frac{\gamma}{2}} \,d\sigma
    \end{split}
  \end{equation}
  for $\eta>\zeta$. Since 
  \begin{equation*}
    \frac{(2-\gamma)r}{N}-1>0,~~\frac{N}{2r} \left(\frac{(2-\gamma)r}{N}-1 \right)-1>-1~~\text{and}~~ -\frac{\gamma}{2}>-1, 
  \end{equation*}
  we have
  \begin{equation*}
    \int_{0}^{1} \sigma^{\frac{N}{2r} \left(\frac{(2-\gamma)r}{N} - 1 \right) - 1} (1-\sigma)^{-\frac{\gamma}{2}} \,d\sigma < \infty
  \end{equation*}
  and hence
  \begin{equation}\label{3.20}
    \frac{rN}{2} F(\eta)^{\frac{(2-\gamma)r}{N} - 1} \int_{0}^{1} \sigma^{\frac{N}{2r} \left(\frac{(2-\gamma)r}{N} - 1 \right)-1} (1-\sigma)^{-\frac{\gamma}{2}} \,d\sigma \to 0 \quad \text{as}~~\eta\to\infty. 
  \end{equation}
  By $\eqref{3.18}$, $\eqref{3.19}$ and $\eqref{3.20}$, we see that $H$ satisfies all assumptions of Proposition~\ref{MainProp} (i). 
  
  We next consider the case (ii). Since $q=1+r$ and $f'(\xi)F(\xi)-r\to1>0$ as $\xi\to\infty$, there exists $R_3>0$ such that
  \begin{equation*}
    \frac{d}{d\xi} \left(\frac{f(\xi)}{H(\xi)} \right) = F(\xi)^{r-1} (f'(\xi) F(\xi)-r) \ge 0 \quad \text{for}~~\xi>R_3, 
  \end{equation*}
  Furthermore, by $\eqref{sub-assum}$, there exists $R_4>0$ such that
  \begin{equation*}
    \frac{d}{d\xi} H'(\xi) = H''(\xi) = \frac{r((r+1) - f'(\xi) F(\xi))}{f(\xi)^{2} F(\xi)^{r+2}} \ge 0 \quad \text{for}~~\xi>R_4. 
  \end{equation*}
  Thus, $\eqref{*1}$ and $\eqref{*2}$ with $\theta=1$ hold. Therefore, by the same argument as in the case (i), $H$ satisfies all assumptions of Proposition~\ref{MainProp} (i). Therefore, in the cases (i) and (ii), we see that $H$ satisfies all assumptions of Proposition~\ref{MainProp} (i) and thus, there exists $T>0$ such that $\eqref{1.1}$ admits a local-in-time nonnegative solution $u$ in $\RN \times(0,T)$. 

  Finally, we consider the case (iii). By the same argument as in the case (i), $\eqref{*1}$ and $\eqref{*2}$ also hold. We check that $H$ satisfies $\eqref{3.2}$. By $\eqref{*1}$ and $\eqref{*2}$, there exists $R_5>0$ such that
  \begin{equation*}
    \Phi(\xi)=\frac{H'(\xi)}{H(\xi)^{1-\theta}},\quad \Psi(\xi)=\frac{f(\xi)}{H(\xi)^{\theta}} 
  \end{equation*}
  for $\xi>R_5$ and $\Phi$, $\Psi$ are nondecreasing on $(R_5,\infty)$. Since $r=N/(2-\gamma)$, we have 
  \begin{equation*}
    \begin{split}
      \Phi(\eta) &\int_{\eta}^{\infty} \frac{\Psi(\tau) H'(\tau)}{H(\tau)^{1+\frac{2-\gamma}{N}}} \left(1-\left(\frac{H(\eta)}{H(\tau)} \right)^{\frac{2}{N}} \right)^{-\frac{\gamma}{2}} \,d\tau \\
      &=\frac{H'(\eta)}{H(\eta)^{1-\theta}} \int_{\eta}^{\infty} \frac{f(\tau)H'(\tau)}{H(\tau)^{1+\frac{2-\gamma}{N}+\theta}} \left(1-\left(\frac{H(\eta)}{H(\tau)} \right)^{\frac{2}{N}} \right)^{-\frac{\gamma}{2}} \,d\tau \\
      &=\frac{r^{2}}{f(\eta)F(\eta)^{r\theta+1}} \int_{\eta}^{\infty} F(\tau)^{r\theta + \frac{2-\gamma}{N} r - 1} \left(1-\left(\frac{F(\tau)}{F(\eta)} \right)^{\frac{2r}{N}} \right)^{-\frac{\gamma}{2}} \,d\tau \\
      &= \left(\frac{N}{2-\gamma} \right)^{2} \frac{1}{f(\eta)F(\eta)^{\frac{N}{2-\gamma}\theta+1}} \int_{\eta}^{\infty} F(\tau)^{\frac{N}{2-\gamma}\theta} \left(1-\left(\frac{F(\tau)}{F(\eta)} \right)^{\frac{2}{2-\gamma}} \right)^{-\frac{\gamma}{2}} \,d\tau \\
      &=\frac{N^{2}}{2(2-\gamma)} \frac{1}{f(\eta)} \int_{0}^{1} \sigma^{\frac{N}{2}\left(\theta + \frac{2-\gamma}{N} \right)-1} (1-\sigma)^{-\frac{\gamma}{2}} f(F^{-1}(F(\eta) \sigma^{\frac{2-\gamma}{2}})) \,d\sigma
    \end{split}
  \end{equation*}
  for $\eta>R_5$. Here, we used the change of variables $(F(\tau)/F(\eta))^{2/(2-\gamma)} = \sigma$ in the last equality. This together with Proposition~\ref{keyprop} implies that $H$ satisfies all assumptions of Proposition~\ref{MainProp} (ii). Therefore, by Proposition~\ref{MainProp} (ii), we see that there exists $T>0$ such that $\eqref{1.1}$ admits a local-in-time nonnegative solution $u$ in $\RN\times(0,T)$. Furthermore, in all cases, the estimate $\eqref{Exist-thm-est}$ follows from $\eqref{MainProp-est}$. This completes the proof of Theorem~\ref{Exist-thm}. 
\end{proof}


\section{Nonexistence result}
In this section, we prove Theorem~\ref{Nonexist-thm}. We first give the necessary condition of the existence of a local-in-time nonnegative solution to $\eqref{1.1}$. 

\begin{lem}\label{Nonexist-lem}
  Suppose that $f$ satisfies the same assumptions as in Theorem~$\ref{Nonexist-thm}$. Let $u_0\in L^{1}_{\rm ul}(\RN)$ and assume that there exists a local-in-time nonnegative solution $u$ to problem $\eqref{1.1}$ in $\RN\times(0,T)$ with initial function $u_0$ for some $T>0$. Then, there exists $C>0$ depending on $\delta$, $N$ and $\gamma$ such that 
  \begin{equation*}
    [e^{t\Delta} u_0](0) \le F^{-1}(C t^{1-\frac{\gamma}{2}})
  \end{equation*}
  for $t\in(0,T)$. 
\end{lem}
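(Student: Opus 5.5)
Fix $t\in(0,T)$ and evaluate the integral equation of Definition~\ref{Defsol} at the point $x=0$, time $t$. The idea is to derive an ordinary differential inequality in $s\in(0,t)$ for a weighted average of $u(\cdot,s)$, and to integrate it with the help of the primitive $F$. Define
\begin{equation*}
  w(s):=\big[e^{(t-s)\Delta}u(s)\big](0),\qquad 0<s<t .
\end{equation*}
Applying $e^{(t-s)\Delta}$ to the integral equation and using the semigroup property gives
\begin{equation*}
  w(s)\ge [e^{t\Delta}u_0](0)+\int_0^s \big[e^{(t-\tau)\Delta}|\cdot|^{-\gamma}f(u(\tau))\big](0)\,d\tau .
\end{equation*}
So $w(s)\ge [e^{t\Delta}u_0](0)=:a$, and $w$ dominates the solution of an integral inequality driven by the nonlinear term.

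The main step, and the main obstacle, is a lower bound for the nonlinear term by something in terms of $f(w(\tau))$. This requires a reverse H\"older/Jensen argument that absorbs the singular weight $|y|^{-\gamma}$. Here I use \eqref{Nonexist-assum}. It implies that $g:=f^{1-\delta}$ is convex: indeed $g''=(1-\delta)f^{-\delta-1}\big(ff''-\delta (f')^2\big)\ge0$. Write $f=g^{1/(1-\delta)}$ and set $d\mu=G(-y,t-\tau)\,dy$, a probability measure. Since $|y|^{-\gamma}>0$, H\"older's inequality in reverse form gives
\begin{equation*}
  \int |y|^{-\gamma}g^{\frac1{1-\delta}}\,d\mu\ \ge\ \Big(\int g\,d\mu\Big)^{\frac1{1-\delta}}\Big(\int |y|^{\frac{\gamma}{\delta}}\,d\mu\Big)^{-\frac{\delta}{1-\delta}} .
\end{equation*}
This is $\int g\le(\int |y|^{-\gamma}g^{p})^{1/p}(\int|y|^{\gamma p'/p})^{1/p'}$ with $p=1/(1-\delta)$. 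Jensen's inequality (Proposition~\ref{Jensen}(i)) gives $\int g(u)\,d\mu\ge g(w(\tau))$, and scaling gives
\begin{equation*}
  \int |y|^{\gamma/\delta}\,d\mu=C(t-\tau)^{\gamma/(2\delta)} .
\end{equation*}
Hence
\begin{equation*}
  \big[e^{(t-\tau)\Delta}|\cdot|^{-\gamma}f(u(\tau))\big](0)\ge C(t-\tau)^{-\gamma/2}f(w(\tau))\ge Ct^{-\gamma/2}f(w(\tau)) .
\end{equation*}
Smallness of $\delta$ only matters to keep constants finite; any $\delta\in(0,1)$ works for this step. I should also check that $e^{(t-\tau)\Delta}$ of $|\cdot|^{-\gamma}f(u)$ is finite a.e., which follows from $u\in L^\infty_{\rm loc}((0,T),L^\infty)$ and $\gamma<N$.

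It follows that $z(s):=a+Ct^{-\gamma/2}\int_0^s f(w)\,d\tau$ satisfies $w\ge z$. Since $f$ is nondecreasing, $z'\ge Ct^{-\gamma/2}f(z)$. Therefore
\begin{equation*}
  \frac{d}{ds}F(z(s))=-\frac{z'}{f(z)}\le -Ct^{-\gamma/2} .
\end{equation*}
Integrating over $(0,s)$ and letting $s\uparrow t$, using $F\ge0$, gives
\begin{equation*}
  F(a)\ge F(a)-F(z(s))\ge Ct^{-\gamma/2}s\to Ct^{1-\gamma/2} .
\end{equation*}
Since $F$ is decreasing, $a\le F^{-1}(Ct^{1-\gamma/2})$, which is the claim. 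The delicate points are justifying the pointwise evaluation at $x=0$ (which may need replacing $0$ by a.e. points and using continuity of $e^{t\Delta}u_0$) and the measurability and finiteness of $w$. These are routine given $u\in L^\infty((0,T),L^1_{\rm ul})$.
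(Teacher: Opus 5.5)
Your proof is correct and is essentially the paper's argument. You use convexity of $f^{1-\delta}$ from \eqref{Nonexist-assum}, then reverse H\"older plus Jensen at $x=0$ to get $[e^{(t-\tau)\Delta}|\cdot|^{-\gamma}f(u(\tau))](0)\ge C(t-\tau)^{-\gamma/2}f(w(\tau))$, and then integrate $-\frac{d}{d\tau}F$ along $\tau\mapsto w(\tau)$. The paper differentiates $w$ itself and integrates $(t-\tau)^{-\gamma/2}$ exactly, whereas you bound it by $t^{-\gamma/2}$ and pass through the comparison function $z$; this only changes the constant.

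One slip to fix: with $p=1/(1-\delta)$ and $p'=1/\delta$, the weight exponent $\gamma p'/p$ equals $\frac{1-\delta}{\delta}\gamma$, as in the paper, not $\gamma/\delta$. With $\gamma/\delta$ your displayed reverse H\"older inequality is actually false (test it with $g\equiv 1$ as $t-\tau\to 0$), and it would produce $(t-\tau)^{-\gamma/(2(1-\delta))}$. The correct exponent gives exactly the $(t-\tau)^{-\gamma/2}$ you go on to use.
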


\begin{proof}
  Fix $t\in(0,T)$ and take $\tau\in(0,t)$. Since $u$ is a solution to $\eqref{1.1}$, it follows from Fubini's theorem that
  \begin{equation}\label{Nonexist-pr-eq1}
    [e^{(t-\tau)\Delta} u(\tau)](x) = [e^{t\Delta} u_0](x) + \int_{0}^{\tau} [e^{(t-s)\Delta} |\cdot|^{-\gamma} f(u(\cdot, s))](x) \,ds
  \end{equation}
  for $x \in \RN$. Let $\delta>0$ be small enough such that $\eqref{Nonexist-assum}$ holds. Then, we have
  \begin{equation*}
    \frac{d^{2}}{du^{2}} (f(u)^{1-\delta}) = (1-\delta) f(u)^{-\delta-1} \{f''(u) f(u) - \delta (f'(u))^{2}\} \ge 0 \quad \text{for}~~u\ge 0. 
  \end{equation*}
  Thus, $f^{1-\delta}$ is convex on $[0, \infty)$. Thus, it follows from Proposition~\ref{Jensen} and the reverse H\"{o}lder's inequality that 
  \begin{align*}
    &[e^{(t-s)\Delta} |\cdot|^{-\gamma} f(u(\cdot, s))](x) \\
    &= \int_{\RN} G(x-y, t-s) |y|^{-\gamma} f(u(y, s)) \,dy \\
    &=\int_{\RN} (G(x-y, t-s) |y|^{\frac{1-\delta}{\delta}\gamma})^{-\frac{\delta}{1-\delta}} (G(x-y, t-s) f(u(y,s))^{1-\delta})^{\frac{1}{1-\delta}} \,dy \\
    &\ge\left(\int_{\RN} G(x-y, t-s) |y|^{\frac{1-\delta}{\delta}\gamma} \,dy \right)^{-\frac{\delta}{1-\delta}} \left(\int_{\RN} G(x-y, t-s) f(u(y, s))^{1-\delta} \,dy \right)^{\frac{1}{1-\delta}} \\
    &=\left([e^{(t-s)\Delta} |\cdot|^{\frac{1-\delta}{\delta}\gamma}](x) \right)^{-\frac{\delta}{1-\delta}} \left([e^{(t-s)\Delta} f(u(\cdot, s))^{1-\delta}](x) \right)^{\frac{1}{1-\delta}} \\
    &\ge \left([e^{(t-s)\Delta} |\cdot|^{\frac{1-\delta}{\delta}\gamma}](x) \right)^{-\frac{\delta}{1-\delta}} f([e^{(t-s)\Delta} u(\cdot, s)](x))
  \end{align*}
  for $x\in\RN$ and $s\in(0, \tau)$. We now define $H(x, \tau)$ as the right hand side of $\eqref{Nonexist-pr-eq1}$, i.e., 
  \begin{equation*}
    H(x, \tau) := [e^{t\Delta} u_0](x) + \int_{0}^{\tau} [e^{(t-s)\Delta} |\cdot|^{-\gamma} f(u(\cdot, s))](x) \,ds. 
  \end{equation*}
  Then, we note that $[e^{(t-\tau)\Delta}u(\tau)](x)=H(x,\tau)$ and for fixed $x\in\RN$, $H(x, \tau)$ is absolutely continuous with respect to $\tau$ on $(0,t)$. Thus, we see that $H$ is differentiable a.e.~in $(0, t)$ and hence, we have
  \begin{align*}
    \frac{\partial}{\partial \tau} H(x, \tau) &= [e^{(t-\tau)\Delta} |\cdot|^{-\gamma} f(u(\cdot, \tau))](x) \\
    &\ge \left([e^{(t-\tau)\Delta} |\cdot|^{\frac{1-\delta}{\delta}\gamma}](x) \right)^{-\frac{\delta}{1-\delta}} f([e^{(t-\tau)\Delta} u(\cdot, \tau)](x)) \\
    &=\left([e^{(t-t)\Delta} |\cdot|^{\frac{1-\delta}{\delta}\gamma}](x) \right)^{-\frac{\delta}{1-\delta}} f(H(x, \tau))
  \end{align*}
  for a.a. $\tau\in(0,t)$. Therefore, for $x \in \RN$ and a.a. $\tau\in(0,t)$, we obtain
  \begin{equation*}
    -\frac{\partial}{\partial \tau} F(H(x, \tau)) = \frac{\frac{\partial}{\partial \tau} H(x, \tau)}{f(H(x, \tau))} \ge \left([e^{(t-\tau)\Delta} |\cdot|^{\frac{1-\delta}{\delta}\gamma}](x) \right)^{-\frac{\delta}{1-\delta}}. 
  \end{equation*}
  Integrating the both sides of the above inequality over $(0, t)$, we have 
  \begin{equation*}
    -F(H(x, t)) + F(H(x, 0)) \ge \int_{0}^{t} \left([e^{(t-\tau)\Delta} |\cdot|^{\frac{1-\delta}{\delta}\gamma}](x) \right)^{-\frac{\delta}{1-\delta}} \,d\tau \quad \text{for}~~x\in\RN. 
  \end{equation*}
  Since $F(H(x, t)) > 0$ and $F(H(x, 0)) = F([e^{t\Delta} u_0](x))$, we obtain
  \begin{equation*}
    F([e^{t\Delta} u_0](x)) \ge \int_{0}^{t} \left([e^{(t-\tau)\Delta} |\cdot|^{\frac{1-\delta}{\delta}\gamma}](x) \right)^{-\frac{\delta}{1-\delta}} \,d\tau \quad \text{for}~~x \in \RN. 
  \end{equation*}
  By the simple calculation, we see that
  \begin{align*}
    \int_{0}^{t} ([e^{(t-\tau)\Delta} |\cdot|^{\frac{1-\delta}{\delta}\gamma}](0))^{-\frac{\delta}{1-\delta}} \,d\tau &= \left\{(4\pi)^{-\frac{N}{2}} \int_{\RN} e^{-\frac{|z|^{2}}{4}} |z|^{\frac{1-\delta}{\delta}\gamma} \,dz \right\}^{-\frac{\delta}{1-\delta}} \int_{0}^{t} (t-\tau)^{-\frac{\gamma}{2}} \,d\tau \\
    &= \left\{(4\pi)^{-\frac{N}{2}} \int_{\RN} e^{-\frac{|z|^{2}}{4}} |z|^{\frac{1-\delta}{\delta}\gamma} \,dz \right\}^{-\frac{\delta}{1-\delta}} \int_{0}^{1} (1-\tau)^{-\frac{\gamma}{2}} \,d\tau \cdot t^{1-\frac{\gamma}{2}} \\
    &=C t^{1-\frac{\gamma}{2}}. 
  \end{align*}
  Therefore, we have 
  \begin{equation*}
    F([e^{t\Delta} u_0](0)) \ge C t^{1-\frac{\gamma}{2}}. 
  \end{equation*}
  This completes the proof of Lemma~\ref{Nonexist-lem}. 
\end{proof}

We are now ready to prove Theorem~\ref{Nonexist-thm}. 

\begin{proof}[\bf Proof of Theorem~\ref{Nonexist-thm}]
  Since $r$ satisfies $0<r<N/(2-\gamma)$, we can choose $A$ such that
  \begin{equation*}
    2-\gamma < A < \frac{N}{r}. 
  \end{equation*}
  We define 
  \begin{equation}\label{Nonexist-u_0def}
    u_0(x) := 
    \begin{dcases}
      F^{-1}(|x|^{A}) & \text{if}~~F(0) = \infty, \\
      F^{-1}(\min\{|x|^{A}, F(0)\}) & \text{if}~~F(0)<\infty. 
    \end{dcases}
  \end{equation}
  Then, we easily see that $F(u_0)^{-r} \in L^{1}_{\rm ul}(\RN)$. We show that $u_0 \in L^{1}_{\rm ul}(\RN)$. Let $\varepsilon_0>0$ satisfy
  \begin{equation*}
    0<\varepsilon_0 < \frac{N}{A}-(q-1). 
  \end{equation*}
  Then, by Lemma~\ref{estofF}, we see that there exists $\delta_1>0$ such that
  \begin{equation*}
    F^{-1}(\sigma) \le C \sigma^{-(q-1+\varepsilon_0)} 
  \end{equation*}
  for $\sigma \in (0, \delta_1)$. Let $R>0$ be small. Then, we have
  \begin{equation*}
    \int_{B(0, R)} F^{-1}\left(|x|^{A} \right) \,dx \le C \int_{0}^{R} \tau^{-A(q-1+\varepsilon_0) + (N-1)} \,d\tau=C\int_{0}^{R} \tau^{A\left(\frac{N}{A} - (q-1) -\varepsilon_0 \right)-1} \,d\tau <\infty. 
  \end{equation*}
  Therefore, we obtain $u_0 \in L^{1}_{\rm ul}(\RN)$. 
  
  In the following, we show that problem $\eqref{1.1}$ admits no local-in-time solution with $u_0$ defined as $\eqref{Nonexist-u_0def}$. Assume by contradiction that, problem $\eqref{1.1}$ admits a solution in $\RN\times(0,T)$ for some $T>0$. 
  Then, by Lemma \ref{Nonexist-lem}, we have 
  \begin{equation}\label{Non1}
    [e^{t\Delta} u_0](0) \le F^{-1}\left(\frac{1}{C} t^{1-\frac{\gamma}{2}}\right) \quad \text{for}~~t \in (0, T). 
  \end{equation}
  On the other hand, since
  \begin{equation*}
    \frac{d^{2}}{d\xi^{2}} \left[F^{-1}(\xi) \right] = f\left(F^{-1}(\xi) \right) f'\left(F^{-1}(\xi) \right)\ge0 \quad \text{for}~~\xi>0, 
  \end{equation*}
  $F^{-1}$ is convex on $(0, \infty)$. Thus, it follows from Proposition~\ref{Jensen} that
  \begin{equation}\label{Non2}
    \begin{split}
      [e^{t\Delta} u_0](0) &= \int_{\RN} G(y, t) F^{-1}(|y|^{A}) \,dy \\
      &\ge F^{-1}\left(\int_{\RN} G(y, t) |y|^{A} \,dy \right) \\
      &= F^{-1}\left((4\pi t)^{-\frac{N}{2}} \int_{\RN} e^{-\frac{|y|^{2}}{4t}} |y|^{A} \,dy \right) \\
       &=F^{-1}(C t^{\frac{A}{2}}) 
     \end{split}
  \end{equation}
  for $t\in(0,T)$. Thus, by $\eqref{Non1}$ and $\eqref{Non2}$, we have
  \begin{equation*}
    F^{-1}\left(C t^{\frac{A}{2}}\right) \le F^{-1}\left(\frac{1}{C} t^{1-\frac{\gamma}{2}} \right) 
  \end{equation*}
  for $t\in(0,T)$. Since $F^{-1}$ is decreasing, we have
  \begin{equation*}
    t^{\frac{A}{2}} \ge C t^{1-\frac{\gamma}{2}}, \quad \text{that is,} \quad t^{\frac{A}{2} - \left(1-\frac{\gamma}{2} \right)} \ge C 
  \end{equation*}
  for $t \in (0, T)$. Therefore, this is contradiction for small $t>0$, since $A/2 - (1-\gamma/2)>0$. This completes the proof of Theorem~\ref{Nonexist-thm}. 
\end{proof}

\subsection*{Acknowledgements}
The author would like to thank Professor Jun-ichi Segata for useful advice and suggestions. He would also like to thank Assistant Professor Nobuhito Miyake for helpful discussions and comments.


\begin{thebibliography}{99}
\bibitem{ACCL25} A.~Aparcana, B.~Carhuas-Torre, R.~Castillo and M.~Loayza, 
\textit{Existence and non-existence of solutions for {H}ardy parabolic equations with singular initial data}, 
Electron. J. Differential Equations (2025), Paper No. 67, 11. 

\bibitem{ARCD04} J.M.~Arriera, A.~Rodriguez-Bernal, J.W.~Cholewa and T.~Dlotko,
\textit{Linear parabolic equations in locally uniform spaces}, 
Math. Models Methods Appl. Sci. {\bf 14} (2004), 253-293. 

\bibitem{BC96} H.~Brezis and T.~Cazenave, 
\textit{A nonlinear heat equation with singular initial data}, 
J. Anal. Math. {\bf 68} (1996), 277--304. 

\bibitem{BTW17} B.~Ben Slimene, S.~Tayachi and F.B.~Weissler, 
\textit{Well-posedness, global existence and large time behavior for {H}ardy-{H}\'enon parabolic equations}, 
Nonlinear Anal. {\bf 152} (2017), 116--148. 

\bibitem{CCL25} B.~Carhuas-Torre, R.~Castillo and M.~Loayza, 
\textit{The {H}ardy parabolic problem with initial data in uniformly local {L}ebesgue spaces}, 
J. Differential Equations {\bf 424} (2025), 438--462. 

\bibitem{CGL22} R.~Castillo, O.~Guzm\'an-Rea and M.~Loayza, 
\textit{On the local existence for {H}ardy parabolic equations with singular initial data}, 
J. Math. Anal. Appl. {\bf 510} (2022), 126022. 

\bibitem{C19} N.~Chikami, 
\textit{Composition estimates and well-posedness for {H}ardy-{H}\'enon parabolic equations in {B}esov spaces}, 
J. Elliptic Parabol. Equ. {\bf 5} (2019), 215--250. 

\bibitem{CIT21} N.~Chikami, M.~Ikeda and K.~Taniguchi, 
\textit{Well-posedness and global dynamics for the critical {H}ardy-{S}obolev parabolic equation}, 
Nonlinearity {\bf 34} (2021), 8094--8142. 

\bibitem{CIT22} N.~Chikami, M.~Ikeda and K.~Taniguchi, 
\textit{Optimal well-posedness and forward self-similar solution for the {H}ardy-{H}\'enon parabolic equation in critical weighted {L}ebesgue spaces}, 
Nonlinear Anal. {\bf 222} (2022), Paper No. 112931, 28. 

\bibitem{F14} Y.~Fujishima, 
\textit{Blow-up set for a superlinear heat equation and pointedness of the initial data}, 
Discrete Contin. Dyn. Syst. {\bf 34} (2014), 4617--4645. 

\bibitem{FHIL24} Y.~Fujishima, K.~Hisa, K.~Ishige and R.~Laister, 
\textit{Local solvability and dilation-critical singularities of supercritical fractional heat equations}, 
J. Math. Pures Appl. (9) {\bf 186} (2024), 150--175. 

\bibitem{FI18} Y.~Fujishima and N.~Ioku, 
\textit{Existence and nonexistence of solutions for the heat equation with a superlinear source term}, 
J. Math. Pures Appl. (9) {\bf 118} (2018), 128--158. 

\bibitem{FI21} Y.~Fujishima and N.~Ioku, 
\textit{Solvability of a semilinear heat equation via a quasi scale invariance}, 
Geometric properties for parabolic and elliptic {PDE}s, Springer INdAM Ser., {\bf 47} Springer, Cham, [2021] \copyright 2021, 79--101. 

\bibitem{FI23} Y.~Fujishima and N.~Ioku, 
\textit{Quasi self-similarity and its application to the global in time solvability of a superlinear heat equation}, 
Nonlinear Anal. {\bf 236} (2023), 113321. 

\bibitem{G86} Y.~Giga, 
\textit{Solutions for semilinear parabolic equations in {$L^p$} and regularity of weak solutions of the {N}avier-{S}tokes system}, 
J. Differential Equations {\bf 62} (1986), 186--212. 

\bibitem{GM22} T.~Giraudon and Y.~Miyamoto, 
\textit{Fractional semilinear heat equations with singular and nondecaying initial data}, 
Rev. Mat. Complut. {\bf 35} (2022), 415--445. 

\bibitem{HS24} K.~Hisa and M.~Sier\.z\c ega,
\textit{Existence and nonexistence of solutions to the {H}ardy parabolic equation}, 
Funkcial. Ekvac. {\bf 67} (2024), 149--174. 

\bibitem{HT21} K.~Hisa and J.~Takahashi, 
\textit{Optimal singularities of initial data for solvability of the {H}ardy parabolic equation}, 
J. Differential Equations {\bf 296} (2021), 822--848. 

\bibitem{HM25} K.~Hisa and Y.~Miyamoto, 
\textit{Threshold property of a singular stationary solution for semilinear heat equations with exponential growth}, 
Manuscripta Math. {\bf 176} (2025), Paper No. 61, 30. 

\bibitem{LRSL16} R.~Laister, J.C.~Robinson, M.~Sier\.z\c ega and A.~Vidal-L\'opez, 
\textit{A complete characterisation of local existence for semilinear heat equations in {L}ebesgue spaces}, 
Ann. Inst. H. Poincar\'e{} C Anal. Non Lin\'eaire {\bf 33} (2016), 1519--1538. 

\bibitem{MT06} Y.~Maekawa and Y.~Terasawa,
\textit{The {N}avier-{S}tokes equations with initial data in uniformly local {$L^p$} spaces}, 
Differential Integral Equations {\bf 19} (2006), 369--400. 


\bibitem{MS21} Y.~Miyamoto and M.~Suzuki, 
\textit{Thresholds on growth of nonlinearities and singularity of initial functions for semilinear heat equations}, 
arXiv:2104.14773. 

\bibitem{P97} R.G.~Pinsky, 
\textit{Existence and nonexistence of global solutions for {$u_t=\Delta u+a(x)u^p$} in {${\bf R}^d$}}, 
J. Differential Equations {\bf 133} (1997), 152--177. 

\bibitem{RS13} J.C.~Robinson and M.~Sier\.z\c ega, 
\textit{Supersolutions for a class of semilinear heat equations}, 
Rev. Mat. Complut. {\bf 26} (2013), 341--360. 

\bibitem{T20} S.~Tayachi, 
\textit{Uniqueness and non-uniqueness of solutions for critical {H}ardy-{H}\'enon parabolic equations}, 
J. Math. Anal. Appl. {\bf 488} (2020), 123976, 51. 

\bibitem{W93} X.~Wang, 
\textit{On the {C}auchy problem for reaction-diffusion equations}, 
Trans. Amer. Math. Soc. {\bf 337} (1993), 549--590. 

\bibitem{W79} F.B.~Weissler, 
\textit{Semilinear evolution equations in {B}anach spaces}, 
J. Functional Analysis {\bf 32} (1979), 277--296. 

\bibitem{W80} F.B.~Weissler, 
\textit{Local existence and nonexistence for semilinear parabolic equations in {$L\sp{p}$}}, 
Indiana Univ. Math. J. {\bf 29} (1980), 79--102. 

\bibitem{W81} F.B.~Weissler, 
\textit{Existence and nonexistence of global solutions for a semilinear heat equation}, 
Israel J. Math. {\bf 38} (1981), 29--40. 
  
\end{thebibliography}
\end{document}